\documentclass[a4paper]{amsart}
\usepackage{hyperref}
\usepackage{amssymb,xcolor}
\usepackage{stmaryrd} 
\usepackage{enumerate}
\newcommand{\unfoldedcomment}[7]{}
\newcommand{\comment}[1]{}
\newcommand{\foldedcomment}[7]{}

\usepackage{mathtools}

\newtheorem{theorem}{Theorem}[section]
\newtheorem{definition}[theorem]{Definition}
\newtheorem{lemma}[theorem]{Lemma}
\newtheorem{remark}[theorem]{Remark}
\newtheorem{proposition}[theorem]{Proposition}
\numberwithin{equation}{section}
\numberwithin{figure}{section}
\begin{document}

\title{warped area-minimizing hypersurface and warped product metric}

\author{Yukai Sun}
\address{School of Mathematics and Statistics, Henan University, Kaifeng 475004 P. R. China and Center for Applied Mathematics of Henan Province, Henan University, Zhengzhou 450046 P. R. China
}
\email{sunyukai@henu.edu.cn}

\begin{abstract}
  By studying the warped(or weighted) area-minimizing hypersurface, we prove that the metric can be locally split as a warped product metric under the spectral Ricci or spectral scalar curvature lower bound condition.
  \end{abstract}

\subjclass[2020]{53C24}
\keywords{warped product metric, area-minimizing hypersurface, spectral curvature bound}

\maketitle
\section{Introduction}
For compact manifolds, a classic rigidity result concerning area-minimizing hypersurfaces, due to Cai-Galloway\cite{cai-rigidity-2000}, asserts the following: if a three-dimensional manifold \((M, g)\) has non-negative scalar curvature (\(\operatorname{Sc}_g \geq 0\)) and contains a two-sided, locally area-minimizing torus \(\Sigma\), then \(M\) is flat in a neighborhood of \(\Sigma\); equivalently, \(M\) admits a product metric near \(\Sigma\). This confirms a conjecture originally proposed by Fischer-Colbrie-Schoen\cite{fischer-colbrie-structure-1980}. Building on the results of Schoen-Yau\cite{schoen-existence-1979} on the classifications of area-minimizing surfaces in three-dimensional manifolds with positive scalar curvature, any such surface in \(M\) must be homeomorphic to either \(\mathbb{S}^2\) (the 2-sphere) or \(\mathbb{RP}^2\) (the real projective plane). Subsequently, Bray-Brendle-Neves\cite{bray-rigidity-2010} investigated the case of an area-minimizing 2-sphere \(\Sigma\) in a compact three-dimensional manifold \(M^3\) with scalar curvature bounded below by 2. In particular, they proved that the area of \(\Sigma\) is at most \(4\pi\); crucially, equality holds if and only if \(M^3\) is covered by \(\mathbb{S}^2 \times \mathbb{R}\). This implies that, in the rigid case, \(M\) locally admits a product metric near \(\Sigma\). Later, Zhu\cite{zhu2020rigidity} extended Bray-Brendle-Neves' result to a more general setting for an $n$-dimensional manifold $M^{n}$ that admits a non-zero degree map to $\mathbb{S}^2\times \mathbb{T}^{n}$, where $3\leq n+2\leq 7$. Recently, in the rigid case of their theorem,  Chu-Lee-Zhu\cite{chu-lee-zhu-2025-systole} proved that the universal cover of an $n$-dimensional manifold $(M^{n},g)$ with non-trivial homology group $H_{n}(M)$ splits isometrically as a product, assuming the bi-Ricci curvature is bounded from below by $n-1$.

For noncompact manifolds, there is also an extensive literature concerning metric splitting theorems in the presence of area-minimizing hypersurfaces. For instance, we refer the reader to \cite{anderson-minimal-1989},\cite{Li-Wang-Crelle-noncurved},\cite{chodosh-splitting-2019},\cite{Carlotto-Chodosh-Eichmair-IM-effective},\cite{He-Shi-Yu-CVPDE},\cite{zhu-JDG-2023}.

A natural question thus arises: under what conditions do hypersurfaces admit local splitting as warped product metrics? In this paper, we address this question for compact manifolds and establish several key results. We further elucidate the geometric significance of warped(or weighted) area-minimizing hypersurfaces.
To begin with, we analyze the Ricci curvature and scalar curvature of the warped product manifold $(\mathbb{S}^{1}\times N^{n-1},g=dt^2+f^2(t)g_{N})$
where $f$ denotes a positive smooth function on $\mathbb{S}^1$ and $(N^{n-1},g_{N})$ is a Riemannian manifold. The Ricci curvature of the metric $g$ is
\begin{align}\label{eqn-ricci-curvature}
    \operatorname{Ric}_{g}(\partial_{t},\partial_{t}) =& -(n-1)\frac{f''(t)}{f(t)},\\
    \operatorname{Ric}_{g}(X,Y) =& \operatorname{Ric}_{g_{N}}(X,Y)-\left[\frac{f''(t)}{f(t)}+(n-2)\frac{(f'(t))^2}{f^2(t)}\right]\langle X,Y\rangle_{g},\label{eqn-ricci-curvature2}
\end{align}
where $X,Y\in TN$.
The scalar curvature of the metric $g$ is
\begin{align}\label{eqn-scalar-curvature}
    \operatorname{Sc}_{g} =&\frac{\operatorname{Sc}_{g_{N}}}{f^2} -2(n-1)\frac{f''(t)}{f(t)}-(n-1)(n-2)\frac{(f'(t))^2}{f^2}.
\end{align}
Then, by direct computation, we have
\begin{align}\label{eqn-ricci-condition}
    -(n-1)f\Delta_{g}f^{-1}+\operatorname{Ric}_{g}(\partial_{t},\partial_{t})=&(n-1)(n-3)f^{-2}(f')^2\\ \label{eqn-scalar-condition}
   \quad -(n-1)f\Delta_{g}f^{-1}+\frac{1}{2}\operatorname{Sc}_{g}=&\frac{(n-1)(n-4)}{2}f^{-2}(f')^2\text{, if } \operatorname{Sc}_{g_{N}}=0.
\end{align}
The warped(or weighted) volume
\begin{align*}\label{eqn-constant-weighted-volume}
    E(N^{n-1})=\int_{N}f^{-(n-1)}d\operatorname{vol}_{g|_{N}}=\operatorname{Vol}g_{N}(N)
\end{align*}
is constant. Hence, the first and second variations of $E(N^{n-1}_{t})$ along the $\partial_{t}$ direction are all zero.

The equations (\ref{eqn-ricci-condition}) and (\ref{eqn-scalar-condition}) are similar to the spectral curvature. Therefore, we study the warped minimal hypersurface under the spectral Ricci curvature condition and the spectral scalar curvature condition.
\begin{definition}
  \label{def spec}Let $(M, g)$ be a Riemannian manifold and $u$ be a positive
  smooth function on $M$, the spectral scalar curvature is defined as
  \begin{equation}
    - \gamma u^{- 1} \Delta_g u + \tfrac{1}{2} \operatorname{Sc}_g \label{eq spec scalar}
  \end{equation}
   and the spectral Ricci curvature is defined as
  \begin{equation} \label{eq spec ricci1}
    - \gamma u^{- 1} \Delta_g u + \operatorname{Ric}_g
  \end{equation}
where
  \begin{equation}
    \operatorname{Ric}_g := \inf_{e \in T_x M, |e|_g = 1} \operatorname{Ric}_g (e, e)
    \label{eq Rc}
  \end{equation}
  is the least Ricci curvature at a given point $x \in M$.
\end{definition}
Spectral curvature plays an important role in the study of the stable Bernstein problem; see \cite{chodosh-stable-2024},\cite{mazet-stable-2024}.  A variety of geometric results have been established concerning spectral curvature, including width estimates \cite{hirsch-spectral-2024},\cite{chai-band-2025}, diameter and volume estimates \cite{GC2021}, \cite{AX2024},\cite{chu-spectral-2024},\cite{wu-spectral-2025} and splitting theorems(product case) \cite{antonelli-sharp-2024},\cite{catino-criticality-2025},\cite{Han-Wang-JFA-splitting},\cite{yeung-spectral-2025}.

We obtain some splitting theorems(warped product case) under the spectral curvature condition as follows. Throughout this paper, we let $\gamma=n-1$.
\begin{theorem}\label{Thm-ricci}
    Let $(M^{n},g)$ be a smooth compact connected Riemannian manifold satisfying
    \begin{align}\label{ineqn-spectral-ricci}
      -\gamma u^{-1}\Delta_{g}u+\operatorname{Ric}_{g}\geq (n-1)(n-3)u^{-2}|\nabla u|^2,
    \end{align}
    where $u$ is a smooth positive function on $M$ and $3\leq n\leq 7$. Suppose further that $H_{n-1}(M,\mathbb{Z})\neq 0$. Then there exists a non-trivial homology class  $[\Sigma]\in H_{n-1}(M,\mathbb{Z})$(i.e., \([\Sigma]\neq 0\)) such that:
    \begin{enumerate}
        \item $u$ is constant on $\Sigma$;
        \item $(M^{n},g)$ locally splits as $(M^{n},g=dt^2+f^{2}(t)g_{\Sigma})$ with $f(t)=u^{-1}(t)$;
        \item the Ricci curvature of $\Sigma$ satisfies
        \[\operatorname{Ric}_{g_{\Sigma}}(\cdot,\cdot)\geqslant (n-2)\left[(f'(t))^2-f(t)f''(t)\right]g_{\Sigma}(\cdot,\cdot).\]
    \end{enumerate}
\end{theorem}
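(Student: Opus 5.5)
We minimize the weighted area
\[
E(\Sigma)=\int_{\Sigma}u^{\,n-1}\,d\operatorname{vol}_{g|_\Sigma}
\]
over integral currents representing a fixed nonzero class in $H_{n-1}(M,\mathbb{Z})$. The weight $u^{n-1}$ is the analogue of $f^{-(n-1)}$ with $f=u^{-1}$. Since $u$ is positive and bounded above and below on compact $M$, this is a conformal area: it is the area functional of $\tilde g=u^{2}g$ restricted to hypersurfaces. Because $3\le n\le 7$, geometric measure theory gives a smooth, embedded, two-sided minimizer $\Sigma$ with $[\Sigma]\neq 0$. Passing to a connected component that still carries nontrivial homology, we may assume $\Sigma$ is connected.

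\textbf{Variations.} Let $\nu$ be the unit normal and $H$ the mean curvature, and vary by $\phi\nu$. Stationarity gives the weighted minimal equation
\[
H=-(n-1)u^{-1}\partial_\nu u .
\]
Stability gives, for all $\phi$,
\[
\int_\Sigma u^{n-1}\Big(|\nabla_\Sigma\phi|^2-\big(|A|^2+\operatorname{Ric}(\nu,\nu)\big)\phi^2+(n-1)\big(u^{-1}\nabla^2u(\nu,\nu)-u^{-2}(\partial_\nu u)^2\big)\phi^2\Big)\ge0 .
\]
We rewrite $\nabla^2u(\nu,\nu)=\Delta_g u-\Delta_\Sigma u-H\partial_\nu u$ and substitute the hypothesis (\ref{ineqn-spectral-ricci}) in the form
\[
\gamma u^{-1}\Delta_g u\le \operatorname{Ric}(\nu,\nu)-(n-1)(n-3)u^{-2}|\nabla u|^2 .
\]

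\textbf{Test function and cancellation.} We choose $\phi=u^{-a}$, with $a$ tuned, most likely $a=(n-1)/2$ or $a=n-1$, so that the weight $u^{n-1}\phi^2$ becomes manageable. The tangential terms $-\Delta_\Sigma u$ are integrated by parts and combined with $|\nabla_\Sigma\phi|^2$. We then split $|\nabla u|^2=|\nabla_\Sigma u|^2+(\partial_\nu u)^2$. In the normal terms we use $H=-(n-1)u^{-1}\partial_\nu u$ together with the trace inequality $|A|^2\ge H^2/(n-1)$. The coefficient $(n-1)(n-3)$ in the hypothesis is what should make every remaining integrand a sum of nonpositive squares. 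The outcome is that stability forces each of them to vanish:
\begin{itemize}
\item $\nabla_\Sigma u=0$, so $u$ is constant on $\Sigma$, which is item (1);
\item $A$ is umbilic, $A=\frac{H}{n-1}g_\Sigma$;
\item equality holds in (\ref{ineqn-spectral-ricci}) along $\Sigma$, and $\nu$ attains the least Ricci curvature.
\end{itemize}
\textbf{Main obstacle.} This algebra, finding $a$ and checking that the cross terms close up into squares, is the main obstacle. It should mirror identity (\ref{eqn-ricci-condition}), so we would calibrate the computation against the model space, where every inequality is an equality.

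\textbf{Local foliation.} Following Bray–Brendle–Neves, we use the implicit function theorem to build a foliation $\Sigma_t$ near $\Sigma$ by hypersurfaces whose weighted mean curvature
\[
H_u:=H+(n-1)u^{-1}\partial_\nu u
\]
is constant on each leaf. The linearized operator is the weighted Jacobi operator, and by the previous step its kernel consists of constants. Applying the same inequality to each $\Sigma_t$ gives a differential inequality $\frac{d}{dt}H_u(t)\le C\,|H_u(t)|$ with respect to the lapse. From this, $E(\Sigma_t)\le E(\Sigma)$ for small $|t|$. Minimality then forces equality, so every leaf is itself a weighted minimizer and all the rigidity conclusions hold on each leaf. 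In particular $u$ is constant on leaves, leaves are umbilic, and the lapse is constant. Hence $g=dt^2+g_t$ with $\partial_t g_t=\frac{2H}{n-1}g_t$ and $H=-(n-1)u^{-1}u'$. Integrating gives
\[
g_t=u(t)^{-2}u(0)^{2}g_\Sigma=f(t)^2g_\Sigma \quad\text{after normalizing, with } f=u^{-1},
\]
which is item (2).

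\textbf{Ricci bound.} In the warped product, $\operatorname{Ric}_g\ge\operatorname{Ric}_g(\partial_t,\partial_t)$ holds, because equality in (\ref{ineqn-spectral-ricci}) is attained by $\partial_t$, which realizes the least Ricci curvature. Applying this to $X\in TN$ via (\ref{eqn-ricci-curvature2}) and (\ref{eqn-ricci-curvature}) gives
\[
\operatorname{Ric}_{g_\Sigma}(X,X)-\Big[\tfrac{f''}{f}+(n-2)\tfrac{f'^2}{f^2}\Big]f^2|X|^2_{g_\Sigma}\ge-(n-1)\tfrac{f''}{f}f^2|X|^2_{g_\Sigma}.
\]
Simplifying, $\operatorname{Ric}_{g_\Sigma}\ge(n-2)\big(f'^2-ff''\big)g_\Sigma$, which is item (3).
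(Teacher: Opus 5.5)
Your outline is the paper's proof step for step. You minimize $E$, extract rigidity from stability, build the constant-$\tilde H$ foliation, show every leaf is a minimizer, and then read off $f=u^{-1}$ and item (3) from $\operatorname{Ric}(\nu,\nu)=\operatorname{Ric}_g$. Your remark that $E$ is the $u^2g$-area is a clean way to get existence and regularity. The real gap is that the step you call the main obstacle, which is the heart of the proof, is left as an expectation.

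It does close with your first guess $a=\frac{n-1}{2}$, which is the paper's choice $\psi=\phi u^{\gamma/2}\equiv 1$. Then $u^{n-1}\phi^2\equiv 1$ and $u^{n-1}|\nabla_\Sigma\phi|^2=\frac{\gamma^2}{4}|\nabla_\Sigma w|^2$, where $w=\log u$. Using $\int_\Sigma u^{-1}\Delta_\Sigma u=\int_\Sigma|\nabla_\Sigma w|^2$, your stability inequality becomes
\[
0\le\int_\Sigma\Big[\big(\tfrac{\gamma^2}{4}-\gamma\big)|\nabla_\Sigma w|^2+\gamma u^{-1}\Delta_g u-\operatorname{Ric}(\nu,\nu)-|A|^2-\gamma Hw_\nu-\gamma w_\nu^2\Big].
\]
Now insert three facts: $\gamma u^{-1}\Delta_g u-\operatorname{Ric}(\nu,\nu)\le-\gamma(n-3)(|\nabla_\Sigma w|^2+w_\nu^2)$, $|A|^2\ge H^2/\gamma=\gamma w_\nu^2$, and $-\gamma Hw_\nu=\gamma^2w_\nu^2$. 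The coefficient of $w_\nu^2$ becomes $\gamma^2-2\gamma-\gamma(n-3)=0$ exactly. The coefficient of $|\nabla_\Sigma w|^2$ becomes $\frac{\gamma^2}{4}-\gamma-\gamma(n-3)=-\frac{(n-1)(3n-7)}{4}<0$. So stability forces $\nabla_\Sigma u=0$, umbilicity, $\operatorname{Ric}(\nu,\nu)=\operatorname{Ric}_g$, and equality in the hypothesis, as you claimed. Note that $\frac{\gamma^2}{4}-\gamma>0$ for $n=6,7$, so it is the $(n-1)(n-3)$ term of the hypothesis that makes the sign work. Your other guess $a=n-1$ also works, giving $-(n-1)(n-2)\int_\Sigma u^{1-n}|\nabla_\Sigma w|^2$.

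The foliation step is also not literally ``the same inequality.'' On a leaf you cannot choose the test function, because $\tilde H'(t)$ is the linearized operator applied to the lapse $\phi_t$. You must instead divide the evolution equation by $\phi_t$ and integrate. Set $X=\nabla_{\Sigma_t}\log\phi_t$ and $Y=\nabla_{\Sigma_t}w$. The hypothesis and $|A_t|^2\ge H_t^2/\gamma$ give
\[
-\phi_t^{-1}\tilde H'\ge\operatorname{div}_{\Sigma_t}(X+\gamma Y)+\big|X+\tfrac{\gamma}{2}Y\big|^2+\tfrac{(n-1)(3n-7)}{4}|Y|^2+\tfrac{1}{n-1}\tilde H^2+(n-3)\tilde H\,w_{\nu_t}.
\]
You must keep the tangential part $\gamma(n-3)|Y|^2$ of the hypothesis here. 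Without it the quadratic form is $(1-\frac{\gamma}{4})|X|^2+\gamma|Y+\frac{X}{2}|^2$, which is indefinite for $n=6,7$. The paper's version of this step drops that term, so it deserves care.

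Integrating over $\Sigma_t$ gives $\tilde H'\le-\Psi\tilde H$ with $\Psi=(n-3)\int_{\Sigma_t}w_{\nu_t}\big/\int_{\Sigma_t}\phi_t^{-1}$, which is your $\tilde H'\le C|\tilde H|$. Once $\tilde H\equiv0$, the same inequality forces $X=Y=0$. That gives constant lapse and $u$ constant on each leaf, which is exactly what your derivation of $g=dt^2+f^2g_\Sigma$ and of item (3) uses.
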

\begin{remark}
For $n=3$, then $-2 u^{-1}\Delta_{g}u+\operatorname{Ric}_{g}\geq 0$, i.e., the spectral Ricci curvature is nonnegative. For a Riemannian manifold $(M^{n},g)$ with two ends satisfying the nonnegative spectral Ricci curvature
\[-a \Delta_{g}u+\operatorname{Ric}_{g}u\geq 0\]
with $ a<\frac{4}{n-1}$. Catino-Mari-Mastrolia-Roncoroni\cite{catino-criticality-2025} and Antonelli-Pozzetta-Xu\cite{antonelli-sharp-2024} proved that $M$ splits as a product manifold. As Antonelli-Pozzetta-Xu pointed out, such a product metric splitting fails to hold when $a\geq \frac{4}{n-1}$. Nevertheless, for the case $n=3$ and a=2, if there exists a compact warped area-minimizing hypersurface, then the manifold $M^3$ with two ends will locally split as a warped product metric according to Theorem \ref{Thm-ricci}. For nonnegative Bakry–\'{E}mery Ricci curvature, Liu \cite{liu-2013} obtains a locally split product metric by studying the weighted area-minimizing hypersurface. Li-Wang\cite{Li-Wang-ASENS-weighted} also proved that the manifold splits as a warped product under a lower Ricci curvature bound and additional geometric conditions.
\end{remark}

For the spectral scalar curvature, we have
\begin{theorem}\label{Thm-scalar}
    Let $(M^{n},g)$ be a smooth compact connected Riemannian manifold satisfying

    \begin{equation}\label{ineqn-spectral-scalar}
        -\gamma u^{-1}\Delta_{g}u+\frac{1}{2}\operatorname{Sc}_{g}\geq \frac{\gamma(\gamma-3)}{2}u^{-2}|\nabla u|^2,
    \end{equation}
    where $u$ is a smooth positive function on $M$ and $4\leq n\leq 7$.
    If $H_{n-1}(M,\mathbb{Z})\neq 0$ and $0\neq [\Sigma]\in H_{n-1}(M,\mathbb{Z})$ such that there is no positive scalar curvature on $\Sigma$, then, $u$ is constant on $\Sigma$ and $(M^{n},g)$ locally splits as $(M^{n},g=dt^2+u^{-2}g_{\Sigma})$. Moreover, $(\Sigma,g_{\Sigma})$ is Ricci flat.
\end{theorem}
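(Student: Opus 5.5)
Minimize the warped area
\[
E(\Sigma)=\int_\Sigma u^{\gamma}\,d\operatorname{vol}_{g|_\Sigma},\qquad \gamma=n-1,
\]
in the class $[\Sigma]$. This matches the introduction, since $u=f^{-1}$ gives $f^{-(n-1)}=u^{\gamma}$. Because $M$ is compact, $u$ is positive and bounded, and $n\le7$, geometric measure theory for weighted area (a conformally changed metric $u^{2\gamma/(n-1)}g=u^2g$) produces a smooth embedded two-sided minimizer $\Sigma$ in the class.

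\textbf{First and second variation.} The first variation gives the weighted minimal condition $H=-\gamma u^{-1}\partial_\nu u$. For a normal variation $\phi\nu$ the second variation is nonnegative:
\[
\int_\Sigma u^{\gamma}\Big(|\nabla_\Sigma\phi|^2-\big(|A|^2+\operatorname{Ric}(\nu,\nu)\big)\phi^2+\gamma u^{-1}\nabla^2u(\nu,\nu)\phi^2-\gamma u^{-2}(\partial_\nu u)^2\phi^2\Big)\ge0 .
\]
I then rewrite $\nabla^2u(\nu,\nu)=\Delta_g u-\Delta_\Sigma u-H\partial_\nu u$ and use the Gauss equation
\[
\operatorname{Ric}(\nu,\nu)+|A|^2=\tfrac12\big(\operatorname{Sc}_g-\operatorname{Sc}_\Sigma+H^2+|A|^2\big).
\]
Next I substitute $H=-\gamma u^{-1}u_\nu$ and the hypothesis (\ref{ineqn-spectral-scalar}). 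I choose the test function $\phi=u^{-\gamma/2}\psi$, or more precisely $\phi=u^{-a}\psi$ with $a$ tuned, and integrate the $\Delta_\Sigma u$ term by parts. The target is an inequality of the form
\[
\int_\Sigma |\nabla\psi|^2 u^{b}+\tfrac12\operatorname{Sc}_\Sigma u^{b}\psi^2
\ \ge\ \int_\Sigma \Big(\tfrac12|\mathring A|^2+c_1\,u^{-2}|\nabla_\Sigma u|^2+c_2\,u^{-2}u_\nu^2\Big)u^{b}\psi^2 ,
\]
with $c_1,c_2\ge0$. Writing $|A|^2=|\mathring A|^2+H^2/(n-1)$, the $u_\nu^2$ terms combine into a coefficient that should vanish exactly thanks to the constant $\gamma(\gamma-3)/2$. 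That constant was chosen to match (\ref{eqn-scalar-condition}), where equality holds on the model.

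\textbf{Main obstacle: the algebra of this step.} I must check that with $\gamma=n-1$ the tangential coefficient $c_1$ is nonnegative after absorbing the cross term $\langle\nabla\psi,\nabla u\rangle$ by Cauchy--Schwarz, and that the normal coefficient works out. I expect the restriction $n\ge4$ to enter here. If a sign fails, I would instead use the weight $u^{b}$ to build a conformal metric $\tilde g=u^{2b/(n-3)}g_\Sigma$ on $\Sigma$ for $n-1\ge3$. In that metric the left side becomes the conformal Laplacian quadratic form of $\tilde g$ (up to a positive multiple). Any strictly positive right-hand side would then make the first eigenvalue of the conformal Laplacian positive, giving a PSC metric on $\Sigma$ and contradicting the hypothesis.

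\textbf{Conclusion.} Since $\Sigma$ admits no PSC metric, every inequality must be an equality. This forces $\mathring A=0$, $\nabla_\Sigma u=0$ (so $u$ is constant on $\Sigma$), equality in (\ref{ineqn-spectral-scalar}) along $\Sigma$, and $\operatorname{Sc}_\Sigma=0$ with the first eigenfunction constant. By the standard Kazdan--Warner/Bourguignon argument, a scalar-flat metric on a manifold admitting no PSC metric is Ricci flat. For the local splitting, I follow Bray--Brendle--Neves/Cai--Galloway. I foliate a neighborhood by hypersurfaces $\Sigma_t$ of constant weighted mean curvature $H_t+\gamma u^{-1}u_\nu$, obtained from the implicit function theorem using the nondegeneracy coming from the kernel computation above. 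Running the same computation on each leaf gives a differential inequality showing that $t\mapsto E(\Sigma_t)$ is nonincreasing on both sides. Minimality of $\Sigma$ then forces each $\Sigma_t$ to be a minimizer, so the equality analysis holds on every leaf. Hence the leaves are umbilic with $u$ constant on them, the lapse is constant, and the weighted mean curvature vanishes, which gives $H_t=-\gamma u^{-1}u'$. This yields $g=dt^2+f(t)^2g_\Sigma$ with $f'/f=-u'/u$, i.e. $f=u^{-1}$ after normalization.
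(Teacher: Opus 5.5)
Your outline matches the paper's: a warped-area minimizer, the Schoen--Yau rewriting of stability, a contradiction with the absence of PSC metrics on $\Sigma$, then a foliation by leaves of constant $H+\gamma w_\nu$. It breaks exactly at the step you flag, because nothing forces $\nabla_\Sigma u=0$. Use $\psi=\phi u^{\gamma/2}$, $w=\log u$, the Gauss equation, $|A|^2=|\mathring{A}|^2+\tfrac{H^2}{n-1}$ and $H=-\gamma w_\nu$. The $w_\nu^2$ terms do cancel as you expected. The tangential terms, however, are $\gamma\psi\langle\nabla_\Sigma w,\nabla_\Sigma\psi\rangle-\tfrac{\gamma(\gamma-2)}{4}\psi^2|\nabla_\Sigma w|^2$, a concave quadratic in $\nabla_\Sigma w$. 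Completing the square leaves
\[
\int_\Sigma\Big(\tfrac{2(n-2)}{n-3}|\nabla_\Sigma\psi|^2+\tfrac12\operatorname{Sc}_\Sigma\psi^2\Big)\ \ge\ \tfrac{\gamma(\gamma-2)}{4}\int_\Sigma\Big|\psi\nabla_\Sigma w-\tfrac{2}{\gamma-2}\nabla_\Sigma\psi\Big|^2+\int_\Sigma\Big(\tfrac12|\mathring{A}|^2+D\Big)\psi^2 ,
\]
where $D\ge0$ is the defect in the spectral hypothesis. The potential term legitimately gives $\mathring{A}=0$, $D=0$, and $\lambda_1=0$ for the conformal Laplacian of $g|_\Sigma$. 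But the square vanishes for $\psi=u^{(n-3)/2}$ whatever $u$ does on $\Sigma$. So equality only says that $u^{(n-3)/2}$ is the ground state, i.e.\ $u^2g|_\Sigma$ is scalar flat, and that the Jacobi field is $\phi=u^{-1}$, not a constant.

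A right-hand side that is positive for one $\psi$ does not make $\lambda_1>0$ unless it is a potential $\int V\psi^2$ with $0\le V\not\equiv0$. If you instead absorb the cross term by Cauchy--Schwarz to keep $c_1\int\psi^2|\nabla_\Sigma w|^2$ with $c_1>0$, the coefficient of $|\nabla_\Sigma\psi|^2$ exceeds $\tfrac{2(n-2)}{n-3}$, which is half the conformal-Laplacian constant of $\Sigma^{n-1}$. Positivity of that operator no longer yields a PSC metric. The paper's proof makes the same unjustified step (``if $\nabla_\Sigma w\neq0$, then the operator is positive'').

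No choice of $a,b$ can repair this, because the conclusion is false. Take $v=u^{(n-2)/2}$ as in the paper's remark. The two sides of the hypothesis then differ by exactly $\tfrac12u^2\operatorname{Sc}(u^2g)$, so the hypothesis is equivalent to $\operatorname{Sc}(u^2g)\ge0$. As you noticed, $E$ is the area functional of $\tilde g=u^2g$, so $u$ enters only through $\tilde g$.

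Now take $\tilde g=ds^2+g_{\mathrm{flat}}$ on $\mathbb{S}^1\times\mathbb{T}^{n-1}$ and $g=u^{-2}\tilde g$ with, say, $u=2+\sin x_1$, where $x_1$ is a coordinate on $\mathbb{T}^{n-1}$. Every hypothesis holds. The $E$-minimizers in $[\mathbb{T}^{n-1}]$ are exactly the slices, since they are calibrated by the flat volume form of $\mathbb{T}^{n-1}$. Yet $u$ is not constant on them, and $g|_\Sigma=u^{-2}g_{\mathrm{flat}}$ is not even scalar flat.

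What your argument (Cai--Galloway-type rigidity for $\tilde g$) really proves is this: $\Sigma$ is $g$-umbilic, $u^2g|_\Sigma$ is Ricci flat, and $u^2g$ splits locally as $ds^2+\hat g_\Sigma$. Hence $g=u^{-2}(ds^2+\hat g_\Sigma)$. This is the stated warped product $dt^2+u^{-2}\hat g_\Sigma$ when $u$ is constant on the leaves, but not in general. Correspondingly, the foliation must be built with initial speed $u^{-1}$ (or directly in $\tilde g$), and the lapse of the leaves is proportional to $u^{-1}$ rather than constant.
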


Note that the conditions \((\ref{ineqn-spectral-ricci})\) and \((\ref{ineqn-spectral-scalar})\) are from equations \((\ref{eqn-ricci-condition})\) and \((\ref{eqn-scalar-condition})\) respectively.

\begin{remark}
If we let $v=u^{\frac{n-2}{2}}$, then the inequality (\ref{ineqn-spectral-scalar}) implies that
\[-\frac{2(n-1)}{n-2}v^{-1}\Delta_{g}v+\frac{1}{2}\operatorname{Sc}_{g}\geq 0.\]
Then we can use the conformal change to obtain a nonnegative scalar curvature metric.
\end{remark}

The proofs of Theorem \ref{Thm-ricci} and Theorem \ref{Thm-scalar} rely on the study of warped(or weighted) area-minimizing hypersurfaces. Specifically, we analyze the first and the second variations of such hypersurfaces, then employ a foliation argument analogous to that used in the theory of area-minimizing hypersurfaces \cite{bray-rigidity-2010,chai-band-2025} to establish the theorems.

The organization of this paper is as follows: In section \ref{sec variation}, we compute the first and second variation of warped(or weighted) area-minimizing hypersurfaces. In section \ref{sec-Ric}, we present the proof of Theorem \ref{Thm-ricci}. In section \ref{sec-scalar}, we prove Theorem \ref{Thm-scalar}.

{\em Acknowledgements}: The author would like to express his gratitude to Professor Thomas Richard and Professor Hong Han for their interest and conversation about this work.

\section{Basic fact about warped or weighted minimal hypersurface}\label{sec variation}
Let $(M^n,g)$ be a compact smooth Riemannian manifold, $\Sigma$ a hypersurface embedded in $M^n$, and $u\in C^{\infty}(M)$ a positive function. We define the warped(or weighted) volume of $\Sigma$ by the integral
\begin{equation}
  E (\Sigma) = \int_{\Sigma} u^{\gamma}
  \mathrm{d} \mathcal{H}^{n - 1}, \label{eq E}
\end{equation}
where $\gamma=n-1$ and $\mathcal{H}^{n - 1}$ denotes the $(n-1)$ dimension Hausdorff measure induced by the metric $g$. Let $H$ and $A$ stand for the mean curvature and the second fundamental of $\Sigma$, respectively. First, we give the definition of the warped area-minimizing hypersurface.
\begin{definition}
    Let $\Sigma$ be a compact hypersurface in a Riemannian manifold $(M,g)$. Suppose that the homology class $[\Sigma]\in H_{n-1}(M,\mathbb{Z})$ of $\Sigma$ is non-zero, i.e., $[\Sigma]\neq 0$ in $H_{n-1}(M,\mathbb{Z})$. Then $\Sigma$ is called a warped area-minimizing hypersurface  if
    \[
    \int_{\Sigma} u^{\gamma}
  \mathrm{d} \mathcal{H}^{n - 1}=\inf_{\hat{\Sigma}\in [\Sigma]}\{\int_{\hat{\Sigma}} u^{\gamma}
  \mathrm{d} \mathcal{H}^{n - 1}\}.
    \]
\end{definition}

\begin{lemma}
Let $\Sigma_t$ be a variation of
$\Sigma$ with the variation vector field given by $Y$ and $\nu$ be the unit normal vector of $\Sigma$, then
\begin{equation}
  \tfrac{\mathrm{d}}{\mathrm{d} t} E (\Sigma_t) |_{t = 0} = \int_{\Sigma} (H +
  \gamma u^{- 1} u_{\nu}) \langle Y, \nu \rangle u^{\gamma} \mathrm{d}
  \mathcal{H}^{n - 1} . \label{eq first var}
\end{equation}
We say $\Sigma$ a \text{{\itshape{warped(or weighted) minimal hypersurface}}} if $H +
\gamma u^{- 1} u_{\nu} = 0$ along $\Sigma$.
\end{lemma}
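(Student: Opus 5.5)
The plan is to differentiate $E(\Sigma_t)=\int_{\Sigma_t}u^{\gamma}\,\mathrm{d}\mathcal{H}^{n-1}$ by pulling it back to the fixed hypersurface $\Sigma$. We write $\Sigma_t=F_t(\Sigma)$, where $F_t$ is the variation with $\partial_t F_t|_{t=0}=Y$. Then
\[
E(\Sigma_t)=\int_\Sigma u^\gamma(F_t(x))\,J_t(x)\,\mathrm{d}\mathcal{H}^{n-1}(x),
\]
where $J_t$ is the Jacobian of $F_t$ relative to the induced metrics, with $J_0=1$. Differentiating under the integral sign at $t=0$ gives two contributions. The weight contributes $\gamma u^{\gamma-1}\langle\nabla u,Y\rangle$. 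The area element contributes the classical first variation $\partial_tJ_t|_{t=0}=\operatorname{div}_\Sigma Y$.

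Next we split $Y$ into its tangential and normal parts, $Y=Y^T+\langle Y,\nu\rangle\nu$. The divergence of the normal part is
\[
\operatorname{div}_\Sigma(\langle Y,\nu\rangle\nu)=H\langle Y,\nu\rangle,
\]
using the sign convention for $H$ under which the stated formula holds. Adding the weight term, the integrand becomes
\[
u^\gamma\big(H+\gamma u^{-1}u_\nu\big)\langle Y,\nu\rangle+\Big[u^\gamma\operatorname{div}_\Sigma Y^T+\gamma u^{\gamma-1}\langle\nabla^\Sigma u,Y^T\rangle\Big].
\]
The bracket equals $\operatorname{div}_\Sigma(u^\gamma Y^T)$. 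Since $\Sigma$ is compact without boundary, the divergence theorem on $\Sigma$ shows that this term integrates to zero. What remains is exactly \eqref{eq first var}.

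The only point needing care is the tangential part, and the observation that the weight combines with $Y^T$ into a single divergence handles it. Everything else is the standard computation of the first variation of area. Alternatively, we could assume $Y=\varphi\nu$ from the start, since tangential variations only reparametrize $\Sigma$ and leave $E$ unchanged to first order.
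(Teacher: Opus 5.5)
Your proof is correct. Pull back to $\Sigma$, differentiate both the weight and the area element, and absorb the tangential part into $\operatorname{div}_\Sigma(u^\gamma Y^T)$; this gives exactly the stated formula under the convention $H=\operatorname{div}_\Sigma\nu$, which is the one the paper uses (e.g.\ $H_t=(n-1)f^{-1}f'$ for the slices of $dt^2+f^2g_N$). The paper states this lemma without proof as the standard first-variation computation, and your argument is precisely that computation.
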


\begin{lemma}\label{lem-second-variation}
Consider a warped minimal hypersurface $\Sigma$. Assume that $\Sigma_{t}$ is a smooth $1$-parameter family of smooth hypersurfaces along the variation vector field $Y$ and normal speed $\phi=\langle Y, \nu \rangle$ at $t=0$. Then
\begin{align}
  \tfrac{\mathrm{d}^2}{\mathrm{d} t^2} E (\Sigma_t) |_{t = 0}
= & \int_{\Sigma} [- \Delta_{\partial \Omega} \phi - |A|^2 \phi
-\ensuremath{\operatorname{Ric}}(\nu, \nu) \phi - \gamma u^{- 2} u_{\nu}^2
\phi \\
& \qquad + \gamma u^{- 1} \phi (\Delta u - \Delta_{\partial \Omega} u - H
u_{\nu}) - \gamma u^{- 1} \langle \nabla_{\partial \Omega} u,
\nabla_{\partial \Omega} \phi \rangle] u^{\gamma} \phi .
\label{eq second var}
\end{align}
 A warped minimal hypersurface $\Sigma$ is called stable if \[\tfrac{\mathrm{d}^2}{\mathrm{d} t^2} E (\Sigma_t) |_{t = 0}\geq 0.\]
\end{lemma}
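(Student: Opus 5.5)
The plan is to compute the second variation directly: differentiate the first variation formula \eqref{eq first var} once more at $t=0$ and use the warped minimality $H+\gamma u^{-1}u_\nu=0$ on $\Sigma$. Write
\[
\tfrac{d}{dt}E(\Sigma_t)=\int_{\Sigma_t}(H_t+\gamma u^{-1}u_{\nu_t})\,\phi_t\,u^{\gamma}\,d\mathcal{H}^{n-1}.
\]
When this is differentiated, the derivative can fall on the factor $\phi_t u^\gamma d\mathcal{H}^{n-1}$ or on $H_t+\gamma u^{-1}u_{\nu_t}$. Every term of the first kind is multiplied by $H+\gamma u^{-1}u_\nu$, which vanishes at $t=0$. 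Likewise, the tangential part of $Y$ contributes only a tangential derivative of the vanishing function $H+\gamma u^{-1}u_\nu$. So it suffices to compute the normal-variation derivative
\[
\tfrac{\partial}{\partial t}\big|_{t=0}\bigl(H_t+\gamma u^{-1}u_{\nu_t}\bigr)
\]
for normal speed $\phi$.

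This normal derivative has three pieces, each given by a standard formula.
\begin{itemize}
\item The mean curvature: $\partial_t H=-\Delta_\Sigma\phi-(|A|^2+\operatorname{Ric}(\nu,\nu))\phi$, with the sign convention matching \eqref{eq first var}. Here $\Delta_{\partial\Omega}$ denotes the Laplacian of $\Sigma$.
\item The normal: $\partial_t\nu=-\nabla_\Sigma\phi$.
\item The function $u$: $\partial_t(u^{-1})=-u^{-2}u_\nu\phi$.
\end{itemize}
Combining these gives
\[
\partial_t(u_{\nu_t})=\nabla^2u(\nu,\nu)\,\phi-\langle\nabla_\Sigma u,\nabla_\Sigma\phi\rangle .
\]
Next, use the decomposition of the ambient Laplacian along a hypersurface,
\[
\Delta u=\Delta_\Sigma u+Hu_\nu+\nabla^2u(\nu,\nu),
\]
to replace $\nabla^2u(\nu,\nu)$ by $\Delta u-\Delta_\Sigma u-Hu_\nu$. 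Collecting terms yields exactly the integrand
\[
-\Delta_\Sigma\phi-|A|^2\phi-\operatorname{Ric}(\nu,\nu)\phi-\gamma u^{-2}u_\nu^2\phi+\gamma u^{-1}\phi(\Delta u-\Delta_\Sigma u-Hu_\nu)-\gamma u^{-1}\langle\nabla_\Sigma u,\nabla_\Sigma\phi\rangle .
\]
Multiplying by $u^\gamma\phi$ and integrating gives \eqref{eq second var}.

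The main obstacle is sign bookkeeping. The orientation of $\nu$, the sign convention for $H$, and the sign of $\partial_t\nu$ must all be consistent with \eqref{eq first var}. I would fix them by checking the computation against the round sphere in Euclidean space with $u\equiv1$, and against the warped product model of the introduction, where the second variation must vanish. The rest of the argument is routine differentiation.
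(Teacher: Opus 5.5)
Your proposal is correct, and it is the standard computation. Differentiating the first variation, the terms multiplied by $H+\gamma u^{-1}u_\nu$ and the tangential contribution vanish by warped minimality. The linearization of $H+\gamma u^{-1}u_\nu$ then follows from $\partial_t H$, $\partial_t\nu=-\nabla_\Sigma\phi$ and $\Delta u=\Delta_\Sigma u+Hu_\nu+\nabla^2u(\nu,\nu)$.

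The paper gives no proof of this lemma; the formula, including the leftover notation $\Delta_{\partial\Omega}$, is taken from the source cited for the next lemma. The paper does, however, use exactly your linearization $\delta_{\phi\nu}(H+\gamma u^{-1}u_\nu)$ in its foliation lemmas, so your argument matches the intended one.
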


\begin{lemma}\label{lm rewrite prelim}
The second variation \eqref{eq second var}  of the functional $E (\Sigma_{t})$ in above can be rewritten as
\begin{align*}
\tfrac{\mathrm{d}^2}{\mathrm{d} t^2} E (\Sigma_t) |_{t = 0} & =
\int_{\Sigma} | \nabla_{\Sigma} \psi |^2 + \int_{\Sigma} (\gamma \psi
\langle \nabla_{\Sigma} w, \nabla_{\Sigma} \psi \rangle +
(\tfrac{\gamma^2}{4} - \gamma) \psi^2 | \nabla_{\Sigma} w|^2) \label{eq
cs25} \\
& \quad + \int_{\Sigma} (\gamma u^{- 1} \Delta_g u - (|A|^2 + \operatorname{Ric}
(\nu, \nu))) \psi^2 \\
& \quad - \int_{\Sigma} (\gamma Hw_{\nu} + \gamma w_{\nu}^2)
\psi^2 .
\end{align*}
  where $\psi = \phi u^{\gamma / 2}$ and $w = \log u$.
\end{lemma}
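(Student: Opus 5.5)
The plan is to start from the second variation formula \eqref{eq second var} of Lemma \ref{lem-second-variation} (writing $\Delta_{\partial\Omega}=\Delta_\Sigma$, $\nabla_{\partial\Omega}=\nabla_\Sigma$), remove the two derivative terms by integration by parts on the closed hypersurface $\Sigma$, and then substitute $\phi=\psi u^{-\gamma/2}$ and $\nabla u=u\nabla w$. Everything is pure algebra once the integrations by parts are done. The only care needed is the bookkeeping of the $\psi\langle\nabla_\Sigma w,\nabla_\Sigma\psi\rangle$ and $\psi^2|\nabla_\Sigma w|^2$ coefficients, since three separate terms contribute to each of them.

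First, the Laplacian of $\phi$. Integrating by parts gives
\[\int_\Sigma(-\Delta_\Sigma\phi)\,\phi u^\gamma=\int_\Sigma|\nabla_\Sigma\phi|^2u^\gamma+\gamma\int_\Sigma\phi u^{\gamma-1}\langle\nabla_\Sigma u,\nabla_\Sigma\phi\rangle .\]
The last term exactly cancels the term $-\gamma u^{-1}\langle\nabla_\Sigma u,\nabla_\Sigma\phi\rangle\,u^\gamma\phi$ in \eqref{eq second var}. Hence these two contributions reduce to $\int_\Sigma|\nabla_\Sigma\phi|^2u^\gamma$.

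Second, the tangential Laplacian of $u$. Integrating by parts gives
\[-\gamma\int_\Sigma\phi^2u^{\gamma-1}\Delta_\Sigma u=\gamma\int_\Sigma\langle\nabla_\Sigma u,\nabla_\Sigma(\phi^2u^{\gamma-1})\rangle .\]
Using $\nabla u=u\nabla w$, this equals
\[\int_\Sigma 2\gamma\,\phi u^\gamma\langle\nabla_\Sigma w,\nabla_\Sigma\phi\rangle+\gamma(\gamma-1)\phi^2u^\gamma|\nabla_\Sigma w|^2 .\]

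Next, substitute $\phi=\psi u^{-\gamma/2}$, so that $\nabla_\Sigma\phi=u^{-\gamma/2}\bigl(\nabla_\Sigma\psi-\tfrac{\gamma}{2}\psi\nabla_\Sigma w\bigr)$. This yields
\[|\nabla_\Sigma\phi|^2u^\gamma=|\nabla_\Sigma\psi|^2-\gamma\psi\langle\nabla_\Sigma\psi,\nabla_\Sigma w\rangle+\tfrac{\gamma^2}{4}\psi^2|\nabla_\Sigma w|^2\]
and
\[2\gamma\phi u^\gamma\langle\nabla_\Sigma w,\nabla_\Sigma\phi\rangle=2\gamma\psi\langle\nabla_\Sigma w,\nabla_\Sigma\psi\rangle-\gamma^2\psi^2|\nabla_\Sigma w|^2 .\]
Adding these to $\gamma(\gamma-1)\psi^2|\nabla_\Sigma w|^2$ gives
\[|\nabla_\Sigma\psi|^2+\gamma\psi\langle\nabla_\Sigma w,\nabla_\Sigma\psi\rangle+\bigl(\tfrac{\gamma^2}{4}-\gamma\bigr)\psi^2|\nabla_\Sigma w|^2,\]
which is exactly the first line of the claimed formula.

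The remaining zeroth-order terms convert directly using $\phi^2u^\gamma=\psi^2$, $u^{-1}u_\nu=w_\nu$ and $u^{-2}u_\nu^2=w_\nu^2$:
\begin{itemize}
\item $-(|A|^2+\operatorname{Ric}(\nu,\nu))\phi^2u^\gamma$ becomes $-(|A|^2+\operatorname{Ric}(\nu,\nu))\psi^2$;
\item $\gamma u^{-1}\Delta_g u\,\phi^2u^\gamma$ becomes $\gamma u^{-1}\Delta_g u\,\psi^2$;
\item $-\gamma u^{-2}u_\nu^2\,\phi^2u^\gamma$ becomes $-\gamma w_\nu^2\psi^2$;
\item $-\gamma u^{-1}Hu_\nu\,\phi^2u^\gamma$ becomes $-\gamma Hw_\nu\psi^2$.
\end{itemize}
These give the last two lines of the claimed formula and complete the proof.
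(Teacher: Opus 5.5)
Your proposal is correct: integrating $-\Delta_\Sigma\phi\cdot\phi u^\gamma$ and $-\gamma\phi^2u^{\gamma-1}\Delta_\Sigma u$ by parts on the closed hypersurface $\Sigma$, cancelling the cross term $-\gamma u^{\gamma-1}\phi\langle\nabla_\Sigma u,\nabla_\Sigma\phi\rangle$, and then substituting $\phi=\psi u^{-\gamma/2}$ does give the coefficients $1$, $\gamma$ and $\tfrac{\gamma^2}{4}-\gamma$, and the zeroth-order terms convert exactly as you list. The paper gives no argument of its own and only cites \cite[Lemma~2.4]{chai-band-2025}; your computation is the standard direct verification behind that citation.
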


\begin{proof}
See {\cite[Lemma 2.4]{chai-band-2025}}.
\end{proof}

\section{Spectral Ricci case}\label{sec-Ric}

In this section, we prove Theorem \ref{Thm-ricci}. We first analyze the first and second variations to obtain some key inequalities.

\begin{proof}[Proof of Theorem \ref{Thm-ricci}]
Since $H_{n-1}(M,\mathbb{Z})\neq 0$, by geometric theorem(Section 5.1.6\cite{FH1969}), there exists a stable warped area-minimizing hypersurface in $\Sigma$. By Lemma \ref{lm rewrite prelim}, we have
\begin{align*}
 0\leq&
\int_{\Sigma} | \nabla_{\Sigma} \psi |^2 + \int_{\Sigma} (\gamma \psi
\langle \nabla_{\Sigma} w, \nabla_{\Sigma} \psi \rangle +
(\tfrac{\gamma^2}{4} - \gamma) \psi^2 | \nabla_{\Sigma} w|^2)  \\
& \quad + \int_{\Sigma} (-\gamma(n-3)u^{-2}|\nabla u|^2) \psi^2 \\
& \quad - \int_{\Sigma} \left(\frac{1}{n-1}H^2 +\gamma Hw_{\nu} + \gamma w_{\nu}^2\right)
\psi^2 \\
\leq&\int_{\Sigma} | \nabla_{\Sigma} \psi |^2 + \int_{\Sigma} (\gamma \psi
\langle \nabla_{\Sigma} w, \nabla_{\Sigma} \psi \rangle +
\left(\tfrac{\gamma}{4} -n+2\right)\gamma \psi^2 | \nabla_{\Sigma} w|^2)  \\
& \quad - \int_{\Sigma} \left(\frac{1}{n-1}H^2 +\gamma Hw_{\nu} + \gamma(n-2) w_{\nu}^2\right)
\psi^2 ,
\end{align*}
Here we have used $u^{-2}|\nabla u|^2=|\nabla w|^2=|\nabla_{\Sigma}w|^2+|w_{\nu}|^2$.

From the following equality,
\begin{align*}
&    \gamma \psi
\langle \nabla_{\Sigma} w, \nabla_{\Sigma} \psi \rangle +
\left(\tfrac{\gamma}{4} -n+2\right)\gamma \psi^2 | \nabla_{\Sigma} w|^2\\
=&\gamma\left(\tfrac{\gamma}{4} -n+2\right)\left|\psi\nabla_{\Sigma}w+\frac{1}{2\left(\tfrac{\gamma}{4} -n+2\right)}\nabla_{\Sigma}\psi\right|^2\\
&-\frac{\gamma}{4\left(\tfrac{\gamma}{4} -n+2\right)}|\nabla_{\Sigma}\psi|^2,
\end{align*}
we have
\begin{align*}
 0\leq&\int_{\Sigma} | \nabla_{\Sigma} \psi |^2 + \int_{\Sigma} (\gamma \psi
\langle \nabla_{\Sigma} w, \nabla_{\Sigma} \psi \rangle +
\left(\tfrac{\gamma}{4} -n+2\right)\gamma \psi^2 | \nabla_{\Sigma} w|^2)  \\
& \quad - \int_{\Sigma} (\frac{1}{n-1}H^2 +\gamma Hw_{\nu} + \gamma(n-2) w_{\nu}^2)
\psi^2 \\
=&\int_{\Sigma} \frac{4n-8}{4n-8-\gamma}| \nabla_{\Sigma} \psi |^2+\int_{\Sigma}\gamma\left(\tfrac{\gamma}{4} -n+2\right)\left|\psi\nabla_{\Sigma}w+\frac{1}{2\left(\tfrac{\gamma}{4} -n+2\right)}\nabla_{\Sigma}\psi\right|^2\\
&\quad - \int_{\Sigma} \left(\frac{1}{n-1}H^2 +\gamma Hw_{\nu} + \gamma(n-2) w_{\nu}^2\right)
\psi^2.
\end{align*}
Hence, by $H+\gamma w_{\nu}=0$, we have
\begin{align*}
 \int_{\Sigma} \frac{4n-8}{3n-7}| \nabla_{\Sigma} \psi |^2\geq&\int_{\Sigma}\frac{(n-1)\left(3n-7\right)}{4}\left|\psi\nabla_{\Sigma}w+\frac{2}{\left(7-3n\right)}\nabla_{\Sigma}\psi\right|^2\\
&\quad +\int_{\Sigma} \left(\frac{1}{n-1}H^2 +(n-1) Hw_{\nu} + (n-1)(n-2) w_{\nu}^2\right)
\psi^2\\
=&\int_{\Sigma}\frac{(n-1)\left(3n-7\right)}{4}\left|\psi\nabla_{\Sigma}w+\frac{2}{\left(7-3n\right)}\nabla_{\Sigma}\psi\right|^2.
\end{align*}
Taking $\psi=1$, we can obtain $\nabla_{\Sigma}w=0$, $A=\frac{H}{n-1}g|_{\Sigma}$, $\operatorname{Ric}_{g}(\nu,\nu)=\operatorname{Ric}_{g}$ and
$-\gamma u^{-1}\Delta_{g}u+\operatorname{Ric}_{g}=(n-1)(n-3)w_{\nu}^2$
on $\Sigma$. The remaining proof is using the foliation argument. We do this in Lemma \ref{Lem-foliation-construct}, Lemma \ref{lem-minimizer-ricci}
\end{proof}

The following argument is standard(see \cite{bray-rigidity-2010},\cite{zhu2021},\cite{chai-band-2025}).
\begin{lemma}\label{Lem-foliation-construct}
  Let \( \Sigma \) be a stable warped area-minimizing hypersurface. We can construct a local foliation $\{\Sigma_{t}\}_{-\epsilon< t< \epsilon}$ such that $\Sigma_{t}$ is of
  constant \( H_{t}+\gamma w_{\nu_{t}} \), $\Sigma_{0}=\Sigma$,
  \begin{enumerate}
      \item each $\Sigma_{t}$ is a graph over $\Sigma$ with graph function $\rho_{t}$ along outward unit normal vector field $\nu$ such that
      \begin{eqnarray}
          \left.\frac{\partial \rho_{t}}{\partial t}\right|_{t=0}=1 \quad \text{ and } \quad \frac{1}{\operatorname{vol}(\Sigma)}\int_{\Sigma}\rho_{t}dv=t;
      \end{eqnarray}
      \item and $H_{t}+\gamma\omega_{\nu_{t}}$ is constant on $\Sigma_{t}$.
  \end{enumerate}
  Here, \( d v \) is the volume element of \( \Sigma \) and $H_{t}$ is the mean curvature of $\Sigma_{t}$.
\end{lemma}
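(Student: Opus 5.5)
The plan is an implicit function theorem argument, as in Bray--Brendle--Neves. For a small function $\rho\in C^{2,\alpha}(\Sigma)$, let $\Sigma_\rho=\{\exp_x(\rho(x)\nu(x)):x\in\Sigma\}$. Write $\mathcal{H}(\rho)=H_{\Sigma_\rho}+\gamma w_{\nu_\rho}$ for the warped mean curvature of $\Sigma_\rho$, pulled back to $\Sigma$. The map
\[
\Phi:C^{2,\alpha}(\Sigma)\times\mathbb{R}\to C^{0,\alpha}(\Sigma)\times\mathbb{R},\qquad
\Phi(\rho,t)=\Bigl(\mathcal{H}(\rho)-\tfrac{1}{\operatorname{vol}(\Sigma)}\int_\Sigma\mathcal{H}(\rho)\,dv,\ \tfrac{1}{\operatorname{vol}(\Sigma)}\int_\Sigma\rho\,dv-t\Bigr)
\]
is smooth, and $\Phi(0,0)=0$ because $\Sigma$ is warped minimal. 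Zeros of $\Phi$ are exactly graphs with constant warped mean curvature and prescribed average height $t$.

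The first step is to compute the linearization $D_\rho\Phi(0,0)$. Differentiating the first variation formula (\ref{eq first var}) gives $\mathcal{L}\phi=\frac{d}{ds}\mathcal{H}(s\phi)|_{s=0}$, where
\[
\mathcal{L}\phi=-\Delta_\Sigma\phi-\gamma\langle\nabla_\Sigma w,\nabla_\Sigma\phi\rangle-\bigl(|A|^2+\operatorname{Ric}(\nu,\nu)+\gamma w_\nu^2-\gamma u^{-1}(\Delta u-\Delta_\Sigma u-Hu_\nu)\bigr)\phi .
\]
This operator is the one in Lemma \ref{lem-second-variation}. It is self-adjoint with respect to the weighted measure $u^\gamma d\mathcal{H}^{n-1}$, since $\Delta_\Sigma\phi+\gamma\langle\nabla w,\nabla\phi\rangle=u^{-\gamma}\operatorname{div}(u^\gamma\nabla\phi)$.

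The key input comes from the proof of Theorem \ref{Thm-ricci} above. There we found $\nabla_\Sigma w=0$, that $\Sigma$ is umbilic, and that equality holds in the stability inequality with $\psi=1$, i.e.\ with $\phi=u^{-\gamma/2}$, which is constant on $\Sigma$. Equality in that chain means the zeroth-order coefficient of $\mathcal{L}$ vanishes on $\Sigma$. Hence $\mathcal{L}\phi=-\Delta_\Sigma\phi$, because the gradient term drops out as $\nabla_\Sigma w=0$.

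The step I expect to need care is justifying this vanishing of the potential. I would use the equality case directly: the first eigenvalue of $\mathcal{L}$ is $\ge 0$ by stability, and the Rayleigh quotient at constants is $0$. So constants are first eigenfunctions, and $\mathcal{L}1=0$ forces the potential to vanish. Once $\mathcal{L}=-\Delta_\Sigma$, the linearization
\[
(\phi,s)\mapsto\Bigl(-\Delta_\Sigma\phi,\ \tfrac{1}{\operatorname{vol}(\Sigma)}\int_\Sigma\phi\,dv\Bigr)
\]
maps $C^{2,\alpha}(\Sigma)$ onto $C^{0,\alpha}_0(\Sigma)\times\mathbb{R}$. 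Here $C^{0,\alpha}_0$ denotes the mean-zero functions, so I restrict the target of $\Phi$ accordingly. It is injective, since its kernel consists of constants with zero mean. Standard Schauder theory then gives that it is an isomorphism.

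The implicit function theorem now yields a smooth family $\rho_t$, $|t|<\epsilon$, with $\Phi(\rho_t,t)=0$ and $\rho_0=0$. This gives property (2) and the average condition in (1). Differentiating $\Phi(\rho_t,t)=0$ at $t=0$ gives $-\Delta_\Sigma\dot\rho=0$ with mean $1$, so $\partial_t\rho_t|_{t=0}=1$. Because this derivative is positive, shrinking $\epsilon$ makes the $\Sigma_t$ disjoint, and they foliate a neighborhood of $\Sigma$.
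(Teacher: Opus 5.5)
Your proposal is correct and follows the paper's argument. Both use the inverse/implicit function theorem for the map $\rho\mapsto\bigl(H_\rho+\gamma w_\nu-\text{average},\ \text{average of }\rho\bigr)$, with the linearization reduced to $-\Delta_\Sigma$ by the rigidity conclusions from the proof of Theorem \ref{Thm-ricci}. The one small difference is that the paper checks that the potential vanishes by direct pointwise substitution ($\nabla_\Sigma w=0$, umbilicity, $\operatorname{Ric}(\nu,\nu)=\operatorname{Ric}_g$, equality in the spectral bound, $H=-\gamma w_\nu$), whereas you derive it from the Euler--Lagrange equation of the nonnegative second-variation form at the constant where it vanishes; this is equally valid and avoids the bookkeeping.
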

\begin{proof}
  Let
  \[\hat{C}^{\alpha}(\Sigma)=\left\{\phi\in C^{\alpha}(\Sigma): \text{ }\int_{\Sigma}\phi dv =0\right\}\]
  for some $\alpha\in(0,1)$.
  Consider the map
  \begin{eqnarray*}
    \tilde{\Phi}:C^{2,\alpha}(\Sigma)&\to& \hat{C}^{\alpha}(\Sigma)\times \mathbb{R},\\
     \rho&\mapsto& \left(H_{\rho}+\gamma\omega_{\nu}-\frac{1}{\operatorname{vol}(\Sigma)}{\int_{\Sigma}\left(H_{\rho}+\gamma\omega_{\nu}\right)dv}, \frac{1}{\operatorname{vol}(\Sigma)}{\int_{\Sigma}\rho dv}\right),
  \end{eqnarray*}
  where $H_{\rho}$ is the mean curvature of the graph over $\Sigma$ with graph function $\rho$.

  We calculate the first variation of $H + \gamma u^{- 1} u_{\nu} $ at $t=0$:
\begin{align*}\label{subsec-rigidity-ana}
  & \delta_{\phi \nu} (H + \gamma u^{- 1} u_{\nu}) \\
  = & - \Delta_{\Sigma} \phi - (\ensuremath{\operatorname{Ric}}(\nu,\nu) + |A|^2)
  \phi - \gamma u^{- 2} u_{\nu}^2 \phi - \gamma u^{- 1} \langle \nabla_{\Sigma} u, \nabla_{\Sigma} \phi \rangle \\
  & \quad + \gamma u^{- 1} (\Delta u - \Delta_{\Sigma} u - H u_{\nu}) \phi \\
  = & - \Delta_{\Sigma} \phi -\ensuremath{\operatorname{Ric}} (\nu,\nu) \phi - |A|^2 \phi - \gamma w_{\nu}^2 \phi \\
  & \quad + \gamma u^{- 1} \phi \Delta u - \gamma u^{- 1} \phi
  \Delta_{\Sigma} u - \gamma w_{\nu} ( - \gamma w_{\nu}) \phi \\
  = & - \Delta_{\Sigma} \phi  -\gamma(n-3)w_{\nu}^2\phi+ \frac{n-2}{n-1}\gamma^2 (w_{\nu})^2 \phi -  \gamma w_{\nu}^2 \phi\\
  = & - \Delta_{\Sigma} \phi .
\end{align*}
Then, the linearization of $\tilde{\Phi}$
at $\rho=0$
given by
  \begin{eqnarray}
    D\tilde{\Phi}|_{\rho=0}: C^{2,\alpha}(\Sigma)\to \hat{C}^{\alpha}(\Sigma)\times \mathbb{R},\quad \psi\mapsto \left(-\Delta_{\Sigma} \psi,\frac{1}{\operatorname{vol}(\Sigma)}\int_{\Sigma}\psi dv\right),
  \end{eqnarray}
  is invertible. By the inverse function theorem, we can find a family of functions $\rho_{t}:\Sigma\to \mathbb{R}$ with $t\in (-\epsilon,\epsilon)$ with the following properties:
  \begin{enumerate}
    \item the function $\rho_{t}$ satisfies $\rho_{0}\equiv0$,
    \begin{eqnarray}\label{Eqn-construct-rho}
      \left.\frac{\partial \rho_{t}}{\partial t}\right|_{t=0}\equiv 1,\quad \text{and}\quad \frac{1}{\operatorname{vol}(\Sigma)}\int_{\Sigma}\rho_{t} dv =t
    \end{eqnarray}
  \item the graphs $\Sigma_{t}$ over $\Sigma$ with the graph function $\rho_{t}$ is of constant \( H + \gamma u^{-1} u_{\nu} \).
  \end{enumerate}
  From (\ref{Eqn-construct-rho}), with the value of $\epsilon$ decreased a little bit, the speed $\partial_t\rho_{t}$ will be positive for \( t \in (-\epsilon, \epsilon) \),
from which it follows that the graphs $\{\Sigma_{t}\}_{t\in (-\epsilon,\epsilon)}$ form a foliation around $\Sigma$.
\end{proof}

\begin{lemma}\label{lem-minimizer-ricci}
  $\Sigma_{t}$ is also a stable warped area-minimizing hypersurface in $M$.
\end{lemma}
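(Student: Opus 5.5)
The plan is the standard Bray--Brendle--Neves argument: compare $E(\Sigma_t)$ with $E(\Sigma)$ through the mean-curvature-type quantity $\mathcal{H}(t):=H_t+\gamma w_{\nu_t}$, which is constant on each leaf of the foliation of Lemma \ref{Lem-foliation-construct}. Let $\phi_t=\langle \partial_t,\nu_t\rangle$ be the lapse; it is positive for $|t|<\epsilon$ because $\partial_t\rho_t|_{t=0}=1$. The first variation formula \eqref{eq first var} then gives
\[
\tfrac{d}{dt}E(\Sigma_t)=\mathcal{H}(t)\int_{\Sigma_t}\phi_t\,u^{\gamma}\,d\mathcal{H}^{n-1}.
\]
So it suffices to prove $\mathcal{H}(t)\le 0$ for $t\in[0,\epsilon)$ and $\mathcal{H}(t)\ge0$ for $t\in(-\epsilon,0]$. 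Then $E(\Sigma_t)\le E(\Sigma)$. Since $\Sigma_t$ is homologous to $\Sigma$ and $\Sigma$ minimizes $E$ in $[\Sigma]$, we get $E(\Sigma_t)=E(\Sigma)$, so each $\Sigma_t$ is itself warped area-minimizing. Stability is then automatic, since a minimizer has nonnegative second variation.

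To get the sign of $\mathcal{H}'(t)$, I would redo the computation from the proof of Lemma \ref{Lem-foliation-construct}, but on an arbitrary leaf $\Sigma_t$, where $\mathcal{H}$ is a constant that need not vanish. The general variation formula gives
\[
\mathcal{H}'(t)=-\Delta_{\Sigma_t}\phi_t-\big(\operatorname{Ric}(\nu_t,\nu_t)+|A_t|^2+\gamma w_{\nu_t}^2-\gamma u^{-1}\Delta u+\gamma u^{-1}\Delta_{\Sigma_t}u+\gamma H_tw_{\nu_t}\big)\phi_t-\gamma\langle\nabla_{\Sigma_t}w,\nabla_{\Sigma_t}\phi_t\rangle .
\]
Into this I substitute $H_t=\mathcal{H}-\gamma w_{\nu_t}$ and $|A_t|^2\ge H_t^2/(n-1)$, and use the spectral hypothesis \eqref{ineqn-spectral-ricci} to bound $\operatorname{Ric}(\nu_t,\nu_t)-\gamma u^{-1}\Delta u\ge(n-1)(n-3)|\nabla w|^2$. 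The goal is an inequality of the form
\[
\mathcal{H}'(t)\,\phi_t^{-1}\le -\phi_t^{-1}\Delta_{\Sigma_t}\phi_t-\gamma\langle\nabla_{\Sigma_t} w,\nabla_{\Sigma_t}\log\phi_t\rangle-\gamma u^{-1}\Delta_{\Sigma_t}u-c|\nabla_{\Sigma_t}w|^2+C|\mathcal{H}(t)|\,|w_{\nu_t}|,
\]
with $c\ge0$. The $w_{\nu_t}^2$ terms should cancel exactly as they did at $t=0$; that was the computation giving $-\Delta_\Sigma\phi$.

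I would then multiply by the weight $u^{\gamma}$ (or an appropriate power of $u$ times a power of $\phi_t$), integrate over $\Sigma_t$, and integrate by parts so that the gradient terms combine into a nonpositive square, in the same way the completed square was used in the proof of Theorem \ref{Thm-ricci}. This should yield $\mathcal{H}'(t)\int_{\Sigma_t}\phi_t^{-1}u^{\gamma}\le C|\mathcal{H}(t)|$. With $\mathcal{H}(0)=0$, Gronwall's inequality then gives $\mathcal{H}(t)\le0$ for $t\ge0$ and $\mathcal{H}(t)\ge0$ for $t\le0$.

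The main obstacle is this integration-by-parts step. The drift term $-\gamma\langle\nabla w,\nabla\phi\rangle$ together with $-\gamma u^{-1}\Delta_{\Sigma_t}u$ must be absorbed using the weight $u^\gamma$, and the coefficient in front of $|\nabla_{\Sigma_t}w|^2$ has to come out nonnegative. The first thing I would try is dividing by $\phi_t$ and integrating against $u^{\gamma}$. This mirrors Lemma \ref{lm rewrite prelim} with $\psi=u^{\gamma/2}$, and relies on the same Cauchy--Schwarz completion of squares that produced the coefficient $\frac{4n-8}{3n-7}$ in the proof of Theorem \ref{Thm-ricci}.
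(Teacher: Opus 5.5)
Your skeleton is the paper's: constant-$\tilde H$ leaves, $\frac{d}{dt}E(\Sigma_t)=\mathcal H(t)\int_{\Sigma_t}\phi_t u^{\gamma}$, a sign for $\mathcal H$ from an ODE inequality with $\mathcal H(0)=0$, and minimality of $\Sigma$ forcing $E(\Sigma_t)=E(\Sigma)$. Your target pointwise inequality is also attainable, with $c=\gamma(n-3)$ (the tangential part of the spectral term) and $C=n-3$.

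The gap is the step you flag as the main obstacle, which is the entire content of Proposition~\ref{Prop-monotonic-formula}. The weight you propose for it fails. Put $a=\nabla_{\Sigma_t}\log\phi_t$ and $b=\nabla_{\Sigma_t}w$, and integrate against $u^{\gamma}$. Since $\nabla_{\Sigma_t}u^{\gamma}=\gamma u^{\gamma}b$, the cross terms cancel. But $-\gamma\int u^{\gamma-1}\Delta_{\Sigma_t}u=\gamma(\gamma-1)\int u^{\gamma}|b|^2$ exceeds $c\int u^{\gamma}|b|^2$, so you only get
\[
\mathcal H'(t)\int_{\Sigma_t}\phi_t^{-1}u^{\gamma}\le\int_{\Sigma_t}u^{\gamma}\left(\gamma|b|^2-|a|^2\right)-(n-3)\,\mathcal H(t)\int_{\Sigma_t}u^{\gamma}w_{\nu_t}.
\]
The positive term $\gamma\int_{\Sigma_t}u^{\gamma}|\nabla_{\Sigma_t}w|^2$ is not controlled by $|\mathcal H|$, because you know $\nabla_{\Sigma}w=0$ only on $\Sigma_0$. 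More generally, with weight $u^{s}$ the gradient terms are $-\int u^{s}\left(|a|^2+(\gamma-s)\langle a,b\rangle+\gamma(n-2-s)|b|^2\right)$. This is a nonnegative form only when $(\gamma-s)^2\le 4\gamma(n-2-s)$, which fails at $s=\gamma$. The analogy with Lemma~\ref{lm rewrite prelim} misleads you here: there you choose the test function, whereas here $\phi_t$ is imposed by the foliation and the weight is your only freedom.

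The fix, which is the paper's route, is $s=0$: divide by $\phi_t$ and integrate against the plain area measure of $\Sigma_t$. The gradient terms then become a divergence plus $-\left(|a|^2+\gamma\langle a,b\rangle+\gamma(n-2)|b|^2\right)$. This is pointwise $\le 0$, since $\gamma^2<4\gamma(n-2)$ for $n\ge 3$. You obtain $\mathcal H'\int_{\Sigma_t}\phi_t^{-1}\le-(n-3)\mathcal H\int_{\Sigma_t}w_{\nu_t}$, which is the paper's $\tilde H'+\Psi\tilde H\le 0$.

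You must keep the tangential term $\gamma(n-3)|\nabla_{\Sigma_t}w|^2$ here. The paper's displayed identity discards it and relies on $1-\frac{\gamma}{4}\ge 0$, which fails for $n=6,7$.

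Alternatively, do not divide at all. Integrate the inequality for $\mathcal H'$ over $\Sigma_t$ and use $-\gamma\int(\langle b,\nabla_{\Sigma_t}\phi_t\rangle+\phi_t u^{-1}\Delta_{\Sigma_t}u)=-\gamma\int\phi_t|b|^2$. This gives $\mathcal H'\,|\Sigma_t|\le-\gamma(n-2)\int\phi_t|b|^2-(n-3)\mathcal H\int\phi_t w_{\nu_t}$. With either inequality, your Gronwall step and the rest of your argument go through.
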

\begin{proof}
    The first step is to prove that $H_{t}+\gamma\omega_{\nu}=0$ on $\Sigma_{t}$ for $t\in (-\epsilon,\epsilon)$. Then the stable warped area-minimizing property of $\Sigma_{t}$ follows. We do that in Proposition \ref{Prop-monotonic-formula} and \ref{Prop-minimiser}.
\end{proof}

\begin{proposition}\label{Prop-monotonic-formula}
There exists a continuous function $\Psi (t)$ such
  that
  \begin{equation}
    \tfrac{\mathrm{d}}{\mathrm{d} t} \left( \exp \left(\int_0^t \Psi (\tau)
    \mathrm{d} \tau\right) \tilde{H} \right) \leq 0 \label{eq:H tilde inequality}
  \end{equation}
   where
  \begin{equation}
    \tilde{H}(t) = H_{t} + \gamma w_{\nu_{t}}. \label{H tilde}
  \end{equation}
\end{proposition}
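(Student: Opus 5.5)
Since $\tilde H(t)$ depends only on $t$, the plan is to derive a differential inequality for it from the first variation of $H+\gamma w_\nu$ along the foliation of Lemma \ref{Lem-foliation-construct}. Let $\phi_t=\langle \partial_t,\nu_t\rangle$ be the lapse of the foliation, so that $\phi_t>0$ for $|t|<\epsilon$ (shrinking $\epsilon$ if necessary, using $\partial_t\rho_t|_{t=0}=1$). The computation in the proof of Lemma \ref{Lem-foliation-construct} did not use $\tilde H=0$ until its last lines. Repeating it on $\Sigma_t$ and keeping the terms involving $H_t$ gives
\begin{align*}
\tilde H'(t)&=-\Delta_{\Sigma_t}\phi_t-\big(\operatorname{Ric}(\nu_t,\nu_t)+|A_t|^2\big)\phi_t-\gamma w_{\nu_t}^2\phi_t\\
&\quad+\gamma u^{-1}\phi_t\big(\Delta u-\Delta_{\Sigma_t}u-H_tu_{\nu_t}\big)-\gamma u^{-1}\langle\nabla_{\Sigma_t}u,\nabla_{\Sigma_t}\phi_t\rangle.
\end{align*}

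The first step is to divide by $\phi_t$ and integrate against $u^\gamma$ over $\Sigma_t$. The choice $\psi=u^{\gamma/2}$ in Lemma \ref{lm rewrite prelim}, or equivalently weighting by $u^\gamma\phi_t^{-1}$, is designed so that the gradient terms combine. The idea is to write $-\phi_t^{-1}\Delta\phi_t=-\Delta\log\phi_t-|\nabla\log\phi_t|^2$. Integrating by parts with weight $u^\gamma$, the term $-\gamma\langle\nabla w,\nabla\log\phi_t\rangle$ cancels against the term produced by the integration by parts. One is left with $-|\nabla\log\phi_t|^2$ together with $\gamma|\nabla_\Sigma w|^2$-type terms, exactly as in the rearrangement in the proof of Theorem \ref{Thm-ricci}.

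Next, apply the hypothesis (\ref{ineqn-spectral-ricci}) in the form $\gamma u^{-1}\Delta u-\operatorname{Ric}(\nu,\nu)\le-(n-1)(n-3)|\nabla w|^2$, together with $|A|^2\ge H^2/(n-1)$. Then substitute $w_{\nu_t}=(\tilde H-H_t)/\gamma$ and complete squares as in the proof of Theorem \ref{Thm-ricci}. The quadratic form in $(H_t,w_{\nu_t})$ was shown there to be nonnegative when $\tilde H=0$, and the same holds for the form in $(\tilde H,w_\nu,\nabla_\Sigma w)$ up to cross terms linear in $\tilde H$. The gradient quadratic form is controlled by the same constant $\frac{4n-8}{3n-7}$ argument. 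The expected conclusion is
\[
\tilde H'(t)\int_{\Sigma_t}u^\gamma\phi_t^{-1}\le \tilde H(t)\int_{\Sigma_t}\big(a_t H_t+b_t w_{\nu_t}\big)u^\gamma+O(\tilde H^2).
\]
Because $\tilde H$ is a constant, this is $\tilde H'\le -\Psi(t)\tilde H$ with
\[
\Psi(t)=-\frac{\int_{\Sigma_t}(a_tH_t+b_tw_{\nu_t}+c\,\tilde H)u^\gamma}{\int_{\Sigma_t}u^\gamma\phi_t^{-1}}.
\]
This $\Psi$ is continuous in $t$, since the foliation is smooth. Finally, multiplying by $\exp(\int_0^t\Psi)$ gives (\ref{eq:H tilde inequality}).

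The main obstacle is the bookkeeping in this algebra. The terms quadratic in $\tilde H$ and the terms linear in $\tilde H$ must be absorbed into $\Psi\tilde H$ rather than having an indefinite sign. The remaining genuinely quadratic part in $(w_\nu,\nabla_\Sigma w,\nabla\log\phi_t)$ must be shown nonnegative. I would first try to mimic the $\tilde H=0$ completion of squares exactly and move every term that carries a factor $\tilde H$ into $\Psi$. This is legitimate because $\tilde H$ is constant on $\Sigma_t$, so those terms can be pulled out of the integral as $\tilde H$ times a bounded continuous coefficient.
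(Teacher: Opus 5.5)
Your overall plan is right, and the normal part works as you expect. After using (\ref{ineqn-spectral-ricci}), $|A_t|^2\ge H_t^2/(n-1)$ and $H_t=\tilde H-\gamma w_{\nu_t}$, the normal terms collapse pointwise to $-\frac{1}{n-1}\tilde H^2-(n-3)\tilde H\,w_{\nu_t}$; the coefficient of $w_{\nu_t}^2$ vanishes exactly because $\gamma=n-1$.

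The gap is the weight $u^\gamma$. Write $a=\nabla_{\Sigma_t}\log\phi_t$ and $b=\nabla_{\Sigma_t}w$. Besides the cancellation of $-\gamma\langle a,b\rangle$ that you describe, the term $-\gamma u^{-1}\Delta_{\Sigma_t}u$ integrates against $u^\gamma$ to $+\gamma(\gamma-1)\int_{\Sigma_t}u^\gamma|b|^2$. The spectral hypothesis only supplies $-\gamma(n-3)\int_{\Sigma_t}u^\gamma|b|^2$. Since $\gamma(\gamma-1)-\gamma(n-3)=\gamma$, what you actually get is
\[
\tilde H'\int_{\Sigma_t}u^\gamma\phi_t^{-1}\le\int_{\Sigma_t}u^\gamma\big(\gamma|b|^2-|a|^2\big)-\tilde H\int_{\Sigma_t}u^\gamma\Big(\tfrac{\tilde H}{n-1}+(n-3)w_{\nu_t}\Big).
\]
The first integral has no sign and no factor of $\tilde H$, so it cannot be moved into $\Psi\tilde H$. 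Nor can you argue it away, since $\nabla_{\Sigma_t}w=0$ for $t\neq0$ is part of what the foliation argument is meant to prove. The $\frac{4n-8}{3n-7}$ device does not rescue this either. In the stability argument the test function was free and was set to $\psi\equiv1$, whereas here $\phi_t$ is dictated by the foliation and $a$, $b$ are independent.

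Your parenthetical remark in fact points at the wrong choice. The choice $\psi=u^{\gamma/2}$ means $\phi\equiv1$, and for it the stability inequality in the proof of Theorem~\ref{Thm-ricci} becomes $0\le\gamma\int_\Sigma u^\gamma|\nabla_\Sigma w|^2$, which is vacuous. The useful choice $\psi\equiv1$ corresponds to $\phi=u^{-\gamma/2}$, for which $\phi\,u^\gamma=\phi^{-1}$. So the correct analogue is to integrate $\tilde H'\phi_t^{-1}$ with no weight at all, which is what the paper does. Pointwise,
\[
-\phi_t^{-1}\Delta_{\Sigma_t}\phi_t-\gamma u^{-1}\Delta_{\Sigma_t}u-\gamma\langle a,b\rangle-\gamma(n-3)|b|^2=-\operatorname{div}_{\Sigma_t}(a+\gamma b)-\big(|a|^2+\gamma\langle a,b\rangle+\gamma(n-2)|b|^2\big).
\]
The quadratic form is nonnegative since $\gamma^2/4\le\gamma(n-2)$, i.e. $3n\ge7$. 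The divergence integrates to zero on the closed $\Sigma_t$, which gives $\tilde H'\int_{\Sigma_t}\phi_t^{-1}\le-(n-3)\tilde H\int_{\Sigma_t}w_{\nu_t}$. Hence $\Psi(t)=(n-3)\big(\int_{\Sigma_t}\phi_t^{-1}\big)^{-1}\int_{\Sigma_t}w_{\nu_t}$. Keep the tangential part $\gamma(n-3)|b|^2$ of the spectral bound in this step: without it, the form $|a|^2+\gamma\langle a,b\rangle+\gamma|b|^2$ is nonnegative only when $\gamma\le4$, i.e. $n\le5$.
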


\begin{proof}
    Let $\Phi : \Sigma \times (- \varepsilon,
\varepsilon) \to M$ parametrize the local foliation, $Y = \tfrac{\partial
\Phi}{\partial t}$, and $\phi_t = \langle Y, \nu_t \rangle$. Since we have shown
that $\phi_0$ is a constant. We can fix $\varepsilon$ sufficiently small so that
$\phi_t > 0$ for all $t \in (- \varepsilon, \varepsilon)$. Recall that the first variation gives
\begin{align*}
  & - \tilde{H}' (t) \\
  = & - \tfrac{\mathrm{d}}{\mathrm{d} t} (H_t + \gamma w_{\nu_{t}})
  \\
  = & \Delta_{\Sigma_t} \phi_t + (\ensuremath{\operatorname{Ric}}(\nu_t,\nu_t) +
  |A_t |^2) \phi_t + \gamma u^{- 2} u_{\nu_{t}}^2 \phi_t \\
  & \quad - \gamma u^{- 1} (\Delta_{g} u - \Delta_{\Sigma_t} u - H_{t} u_{\nu_{t}})
  \phi_t + \gamma u^{- 1} \langle \nabla_{\Sigma_t} u, \nabla_{\Sigma_t}
  \phi_t \rangle .
\end{align*}
Using the definition of $\operatorname{Ric}$,
\eqref{H tilde}, and the spectral bound, and with
suitable grouping of terms, we see
\begin{align*}
  & - \tilde{H}' (t) \phi_t^{- 1} \\
  \geq & (\phi_t^{- 1} \Delta_{\Sigma_t} \phi_t + \gamma  u^{- 1}
  \Delta_{\Sigma_t} u + \gamma u^{- 1} \langle \nabla_{\Sigma_t} u,
  \nabla_{\Sigma_t} \phi_t \rangle \phi_t^{- 1}) \\
  & \quad  +\frac{1}{n-1} H_{t}^2 + \gamma w_{\nu_{t}}^2 + \gamma H_{t} w_{\nu_{t}} +\gamma(n-3)u^{-2}|\nabla u|^2.
\end{align*}
Inserting \eqref{H tilde} in the above and using
\begin{align*}
  & (\phi_t^{- 1} \Delta_{\Sigma_t} \phi_t + \gamma  u^{- 1}
  \Delta_{\Sigma_t} u + \gamma u^{- 1} \langle \nabla_{\Sigma_t} u,
  \nabla_{\Sigma_t} \phi_t \rangle \phi_t^{- 1}) \\
  = & \ensuremath{\operatorname{div}}_{\Sigma_t} \left(
  \frac{\nabla_{\Sigma_t} \phi_t}{\phi_t} + \gamma \frac{\nabla_{\Sigma_t}
  u}{u} \right) + (1 - \tfrac{\gamma}{4}) \left| \tfrac{\nabla_{\Sigma_t}
  \phi_t}{\phi_t} \right|^2 + \gamma \left| \frac{\nabla_{\Sigma_t} u}{u} -
  \frac{\nabla_{\Sigma_t} \phi_t}{2 \phi_t} \right|^2 \\
  \geq & \ensuremath{\operatorname{div}}_{\Sigma_t} \left(
  \frac{\nabla_{\Sigma_t} \phi_t}{\phi_t} + \gamma \frac{\nabla_{\Sigma_t}
  u}{u} \right)
\end{align*}
yields
\begin{align*}
  & - \tilde{H}' (t) \phi_t^{- 1} \\
  \geq & \ensuremath{\operatorname{div}}_{\Sigma_t} \left(
  \frac{\nabla_{\Sigma_t} \phi_t}{\phi_t} + \gamma \frac{\nabla_{\Sigma_t}
  u}{u} \right) +\gamma(n-3)u^{-2}|\nabla_{\Sigma_{t}} u|^2+\gamma(n-3)u^{-2} u_{\nu_{t}}^2\\
  & \quad + \frac{1}{n-1}\tilde{H}^2-\frac{2}{n-1}\tilde{H}
  \gamma w_{\nu_{t}}+\frac{1}{n-1}\gamma^2w_{\nu_{t}}^2 + \gamma w_{\nu_{t}}^2+\gamma\tilde{H}w_{\nu_{t}}-\gamma^2w_{\nu_{t}}^2\\
  \geq & \ensuremath{\operatorname{div}}_{\Sigma_t} \left(
  \frac{\nabla_{\Sigma_t} \phi_t}{\phi_t} + \gamma \frac{\nabla_{\Sigma_t}
  u}{u} \right) \\
  & \quad + \frac{1}{n-1}\tilde{H}^2+\frac{n-3}{n-1}\tilde{H}
  \gamma w_{\nu_{t}}+\left(\frac{2-n}{n-1}\gamma+n-2\right)\gamma w_{\nu_{t}}^2
\end{align*}
By the trivial bound $\tilde{H}^2 \geq
0$, we obtain that
\begin{align*}
  & - \tilde{H}' (t) \phi_t^{- 1} \\
  \geq & \ensuremath{\operatorname{div}}_{\Sigma_t} \left( \frac{\nabla_{\Sigma_t}
  \phi_t}{\phi_t} + \gamma \frac{\nabla_{\Sigma_t} u}{u} \right)+\frac{n-3}{n-1}\tilde{H}
  \gamma w_{\nu_{t}}.
\end{align*}
We integrate the above on $\Sigma_t$, and we find by the divergence theorem that,
\[ - \tilde{H}' (t) \int_{\Sigma_t} \tfrac{1}{\phi_t} \geq \tilde{H} (t)
   \int_{\Sigma_t} (n-3)
  w_{\nu_{t}}. \]
We set $\Psi (t) = (\int_{\Sigma_t} \tfrac{1}{\phi_t})^{- 1} \int_{\Sigma_t}
(n-3)
  w_{\nu_{t}}$, then
\[ \tilde{H}' + \Psi (t) \tilde{H} \leq 0. \]
By solving this inequality, we finish the proof of the Proposition.
\end{proof}

\begin{proposition}\label{Prop-minimiser}
  Every $\Sigma_t$ is a stable warped area-minimizing in $M$.
\end{proposition}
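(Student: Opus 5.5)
Using Proposition \ref{Prop-monotonic-formula} together with the minimizing property of $\Sigma_0=\Sigma$, we will show that $\tilde H(t)\equiv 0$ on the foliation and then compare warped volumes.

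\textbf{Step 1: the sign of $\tilde H(t)$.} Set $G(t)=\exp\left(\int_0^t\Psi(\tau)\,d\tau\right)\tilde H(t)$. By (\ref{eq:H tilde inequality}), $G$ is nonincreasing, and $G(0)=\tilde H(0)=0$ because $\Sigma$ is warped minimal. Since the exponential factor is positive, $\tilde H(t)\le 0$ for $t\in[0,\epsilon)$ and $\tilde H(t)\ge 0$ for $t\in(-\epsilon,0]$.

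\textbf{Step 2: comparison of warped volumes.} By the first variation formula (\ref{eq first var}) applied to the foliation, with $\tilde H(t)$ constant on $\Sigma_t$,
\[
\tfrac{d}{dt}E(\Sigma_t)=\tilde H(t)\int_{\Sigma_t}\phi_t\,u^{\gamma}\,d\mathcal H^{n-1}.
\]
Since $\phi_t>0$, $E(\Sigma_t)$ is nonincreasing on $[0,\epsilon)$ and nondecreasing on $(-\epsilon,0]$, so $E(\Sigma_t)\le E(\Sigma_0)$ for all $|t|<\epsilon$.

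\textbf{Step 3: conclusion.} Each $\Sigma_t$ is a graph over $\Sigma$, hence homologous to it, and $\Sigma_0$ minimizes $E$ in $[\Sigma]$. So $E(\Sigma_t)\ge E(\Sigma_0)$, and combined with Step 2, $E(\Sigma_t)=E(\Sigma_0)$ for all $t$. Each $\Sigma_t$ therefore attains the infimum in its (common) homology class, which makes it a warped area-minimizing hypersurface and hence stable.

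It also gives $\tfrac{d}{dt}E(\Sigma_t)\equiv0$. Because the integral $\int_{\Sigma_t}\phi_tu^\gamma$ is strictly positive, $\tilde H(t)\equiv 0$, and each $\Sigma_t$ is warped minimal. This is the first step announced in Lemma \ref{lem-minimizer-ricci}.

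With $\Sigma_t$ known to be minimizing and stable, the argument from the proof of Theorem \ref{Thm-ricci} applies to every leaf. It gives, on each $\Sigma_t$:
\begin{itemize}
\item $\nabla_{\Sigma_t}w=0$;
\item $\Sigma_t$ is umbilic;
\item $\phi_t$ is constant.
\end{itemize}
This is what the later splitting needs.

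\textbf{Main obstacle.} The step I expect to need care is the sign bookkeeping in Steps 1 and 2, which must hold on both sides $t>0$ and $t<0$. The point is to use $\phi_t>0$ and the continuity of $\Psi$ so that the exponential factor stays finite and positive. The rest is routine.
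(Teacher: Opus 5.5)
Your proposal is correct and follows the paper's argument. You get the sign of $\tilde H(t)$ from Proposition~\ref{Prop-monotonic-formula}, use the first variation to get $E(\Sigma_t)\le E(\Sigma_0)$, and conclude equality. Beyond the paper, you spell out two steps it leaves implicit: $\Sigma_t$ is homologous to $\Sigma_0$, so the minimizing property forces $E(\Sigma_t)\ge E(\Sigma_0)$, and $\tilde H(t)\equiv 0$ follows from the constancy of $E(\Sigma_t)$.
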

\begin{proof}
  Let \( \partial_t \) be the variation vector field of the foliation \( \{\Sigma_t \}_{t\in (-\varepsilon,\varepsilon)} \), and \( \phi_t = \langle \partial_t , \nu_t \rangle \).
    Recall that the first variation $E(\Sigma_{t})$
\[ \tfrac{\mathrm{d}}{\mathrm{d} t} E (\Sigma_t) = \int_{\Sigma_t} \tilde{H}
   (t) u^{\gamma} \phi_{t}. \]
 It follows from $\tilde{H} (0) = 0$ and (\ref{eq:H tilde inequality}) that $\tilde{H} (t) \leq 0$ for $t
\geq 0$ and $\tilde{H} (t) \geq 0$ for $t \leq 0$. So $E
(\Sigma_t) \leq E (\Sigma_0)$ for all $t \in (- \varepsilon,
\varepsilon)$ and hence
\[ E (\Sigma_t) = E (\Sigma_0)  \]
for all \( t \in (-\varepsilon, \varepsilon) \).
Therefore, all the foliation analysis on $\Sigma_0$ can be applied to $\Sigma_t$.
Since $M$ is connected, we can conclude that $M$ is foliated by the stable warped area-minimizing hypersurface $\Sigma_t$.
\end{proof}
In the following, we deduce that the metric $g$ can be locally written as a warped product metric. From lemma \ref{Lem-foliation-construct} and lemma \ref{lem-minimizer-ricci}, we can obtain that, locally, $M^{n}$ can be written as $(-\epsilon,\epsilon)\times \Sigma$ such that each $\Sigma_{t}$ is a warped area-minimizing hypersurface and $\Sigma_{t}$ is umbilical. Then the metric $g$ can be written as $g=dt^2+f^2(t)g_{\Sigma}$ for some positive function $f$ on $(-\epsilon,\epsilon)$. Since $H_{t}=-(n-1) u^{-1}u'(t)=(n-1)f^{-1}f'(t)$, we obtain $f(t)=u^{-1}(t)$. From equations (\ref{eqn-ricci-curvature}) and (\ref{eqn-ricci-curvature2}), we have
\[\operatorname{Ric}_{g_{\Sigma}}(\cdot,\cdot)\geq (n-2)[(f'(t))^2-f(t)f''(t)]g_{\Sigma}(\cdot,\cdot)\]
By an open and closed argument, passing to a suitable cover $\tilde{M}$, $(\tilde{M}^{n},g)$ is isometric to $(\mathbb{S}^1\times \Sigma^{n-1},ds^2+f^{2}(t)g|_{\Sigma})$.

\section{Spectral scalar curvature case}\label{sec-scalar}
In this section, we prove Theorem \ref{Thm-scalar}. The proof is analogous to the case of the spectral Ricci case. However, the details are different.
\begin{proof}
Since $H_{n-1}(M,\mathbb{Z})\neq 0$, by the geometric theorem, there exists a stable warped area-minimizing hypersurface in $\Sigma$. Using Schoen-Yau's trick, rewrite the Gauss equations as
\[|A|^2+\operatorname{Ric}(\nu,\nu)=\frac{1}{2}(|A|^2+H^2+\operatorname{Sc}_{g}-\operatorname{Sc}_{\Sigma}).\]
By Lemma \ref{lm rewrite prelim}, we have
\begin{align*}
0\leq &
\int_{\Sigma} | \nabla_{\Sigma} \psi |^2 + \int_{\Sigma} (\gamma \psi
\langle \nabla_{\Sigma} w, \nabla_{\Sigma} \psi \rangle +
(\tfrac{\gamma^2}{4} - \gamma) \psi^2 | \nabla_{\Sigma} w|^2)  \\
& \quad + \int_{\Sigma} (\gamma u^{- 1} \Delta_g u - (|A|^2 + \operatorname{Ric}
(\nu, \nu))) \psi^2 \\
& \quad - \int_{\Sigma} (\gamma Hw_{\nu} + \gamma w_{\nu}^2)
\psi^2 \\
=&\int_{\Sigma} | \nabla_{\Sigma} \psi |^2 + \int_{\Sigma} (\gamma \psi
\langle \nabla_{\Sigma} w, \nabla_{\Sigma} \psi \rangle +
(\tfrac{\gamma^2}{4} - \gamma) \psi^2 | \nabla_{\Sigma} w|^2)  \\
& \quad + \int_{\Sigma} (\gamma u^{- 1} \Delta_g u - \frac{1}{2}(|A|^2 +H^2+\operatorname{Sc}_{g}-\operatorname{Sc}_{\Sigma})) \psi^2 \\
& \quad - \int_{\Sigma} (\gamma Hw_{\nu} + \gamma w_{\nu}^2)
\psi^2.
\end{align*}
Recall $\gamma=n-1$. By $H=-\gamma w_{\nu}$, we obtain that
\begin{align*}
0\leq&\int_{\Sigma} (| \nabla_{\Sigma} \psi |^2 +\frac{1}{2}\operatorname{Sc}_{\Sigma}\psi^2)+ \int_{\Sigma} (\gamma \psi
\langle \nabla_{\Sigma} w, \nabla_{\Sigma} \psi \rangle +
(\tfrac{\gamma^2}{4} - \gamma) \psi^2 | \nabla_{\Sigma} w|^2)  \\
& \quad -\int_{\Sigma} \left( \frac{\gamma(\gamma-3)}{2}u^{-2}|\nabla u|^2+ \frac{n}{2(n-1)}H^2\right) \psi^2 \\
& \quad - \int_{\Sigma} (\gamma Hw_{\nu} + \gamma w_{\nu}^2)
\psi^2\\
=&\int_{\Sigma} \left(| \nabla_{\Sigma} \psi |^2 +\frac{1}{2}\operatorname{Sc}_{\Sigma}\psi^2\right)+ \int_{\Sigma} (\gamma \psi
\langle \nabla_{\Sigma} w, \nabla_{\Sigma} \psi \rangle +
\tfrac{\gamma(2-\gamma)}{4} \psi^2 | \nabla_{\Sigma} w|^2)  \\
& \quad -\int_{\Sigma} \left( \frac{\gamma(\gamma-3)}{2}w_{\nu}^2+ \frac{n}{2(n-1)}H^2+\gamma Hw_{\nu} + \gamma w_{\nu}^2\right) \psi^2 \\
=&\int_{\Sigma} \left(| \nabla_{\Sigma} \psi |^2 +\frac{1}{2}\operatorname{Sc}_{\Sigma}\psi^2\right)+ \int_{\Sigma} (\gamma \psi
\langle \nabla_{\Sigma} w, \nabla_{\Sigma} \psi \rangle +
\tfrac{\gamma(2-\gamma)}{4} \psi^2 | \nabla_{\Sigma} w|^2),
\end{align*}
where we have used that $u^{-2}|\nabla u|^2=|\nabla w|^2=|\nabla_{\Sigma}w|^2+|w_{\nu}|^2$. Since \begin{align*}
    &\frac{\gamma(2-\gamma)}{4}\psi^2|\nabla_{\Sigma}w|^2+\gamma\psi\langle \nabla_{\Sigma}w,\nabla_{\Sigma}\psi\rangle\\
   =&\frac{\gamma(2-\gamma)}{4}\left|\psi\nabla_{\Sigma}w+\frac{2}{2-\gamma}\nabla_{\Sigma}\psi\right|^2-\frac{\gamma}{2-\gamma}|\nabla_{\Sigma}\psi|^2.
\end{align*}
Then, we obtain that
\begin{align*}
&\int_{\Sigma} \left[\left(2+\frac{2}{\gamma-2}\right)| \nabla_{\Sigma} \psi |^2 +\frac{1}{2}\operatorname{Sc}_{\Sigma}\psi^2\right]\\
\geq& \int_{\Sigma} \frac{\gamma(\gamma-2)}{4}\left|\psi\nabla_{\Sigma}w+\frac{2}{2-\gamma}\nabla_{\Sigma}\psi\right|^2.
\end{align*}
Thus, if $\nabla_{\Sigma}w\neq 0$, then the operator
\[-\left(2+\frac{2}{n-3}\right)\Delta_{\Sigma}+\frac{1}{2}\operatorname{Sc}_{\Sigma}=-\left(\frac{2(n-2)}{n-3}\right)\Delta_{\Sigma}+\frac{1}{2}\operatorname{Sc}_{\Sigma}\]
is positive. Since $\Sigma$ admits no positive scalar curvature, by making a conformal change for the metric $g_{\Sigma}$, we will obtain a contradiction. Therefore, we have
$\nabla_{\Sigma}w=0$, $-\gamma u^{-1}\Delta_{g}+\frac{1}{2}\operatorname{Sc}_{g}=\frac{\gamma(\gamma-3)}{2}|\nabla w|^2=\frac{\gamma(\gamma-3)}{2}w_{\nu}^2$, $\operatorname{Sc}_{\Sigma}=0$ and $A_{\Sigma}=\frac{H}{n-1}g_{\Sigma}$. The remaining proof is using the foliation argument as in the spectral Ricci curvature case. We do this in Lemma \ref{Lem-foliation-construct-scalar}, Lemma \ref{lem-minimizer-scalar}.
\end{proof}

\begin{lemma}\label{Lem-foliation-construct-scalar}
  Let \( \Sigma \) be a warped area-minimizing hypersurface of $E(\Sigma)$. We can construct a local foliation $\{\Sigma_{t}\}_{-\epsilon< t< \epsilon}$ such that $\Sigma_{t}$ is of
  constant \( H_{t}+\gamma w_{\nu_{t}} \), $\Sigma_{0}=\Sigma$,
  \begin{enumerate}
      \item each $\Sigma_{t}$ is a graph over $\Sigma$ with graph function $\rho_{t}$ along outward unit normal vector field $\nu$ such that
      \begin{eqnarray}
          \left.\frac{\partial \rho_{t}}{\partial t}\right|_{t=0}=1 \quad \text{ and } \quad \frac{1}{\operatorname{vol}(\Sigma)}\int_{\Sigma}\rho_{t}dv=t;
      \end{eqnarray}
      \item and $H_{t}+\gamma\omega_{\nu_{t}}$ is constant on $\Sigma_{t}$.
  \end{enumerate}
  Here, \( d v \) is the volume element of \( \Sigma \) and $H_{t}$ is the mean curvature of $\Sigma_{t}$.
\end{lemma}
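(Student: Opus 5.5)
The plan is to copy the proof of Lemma \ref{Lem-foliation-construct}. The only new input is the linearization of the warped mean curvature operator at $\Sigma$, which now has to be computed from the scalar curvature equality information. As before, we set
\[
\hat{C}^{\alpha}(\Sigma)=\left\{\phi\in C^{\alpha}(\Sigma):\int_\Sigma\phi\,dv=0\right\},
\]
and consider the map
\[
\tilde\Phi:\rho\mapsto\left(H_\rho+\gamma w_{\nu}-\frac{1}{\operatorname{vol}(\Sigma)}\int_\Sigma (H_\rho+\gamma w_\nu)\,dv,\ \frac{1}{\operatorname{vol}(\Sigma)}\int_\Sigma\rho\,dv\right)
\]
from $C^{2,\alpha}(\Sigma)$ to $\hat C^{\alpha}(\Sigma)\times\mathbb{R}$. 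We aim to show that $D\tilde\Phi|_{\rho=0}$ is $\psi\mapsto(-\Delta_\Sigma\psi+c\psi-\text{mean},\ \text{mean of }\psi)$ with $c$ a constant, ideally $c=0$. Such a map is an isomorphism because $\ker(-\Delta_\Sigma)$ consists of the constants and these are detected by the second component. The inverse function theorem then gives $\rho_t$ with $\rho_0=0$, $\frac{1}{\operatorname{vol}(\Sigma)}\int_\Sigma\rho_t\,dv=t$ and $H_t+\gamma w_{\nu_t}$ constant on $\Sigma_t$. Differentiating the constraint gives $\partial_t\rho_t|_{t=0}\equiv 1$, and after shrinking $\epsilon$ the graphs foliate a neighborhood of $\Sigma$.

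The key computation starts from the first variation formula already used in Lemma \ref{Lem-foliation-construct}:
\[
\delta_{\phi\nu}(H+\gamma w_\nu)=-\Delta_\Sigma\phi-(\operatorname{Ric}(\nu,\nu)+|A|^2)\phi-\gamma w_\nu^2\phi-\gamma\langle\nabla_\Sigma w,\nabla_\Sigma\phi\rangle+\gamma u^{-1}(\Delta u-\Delta_\Sigma u-Hu_\nu)\phi .
\]
We then feed in the rigidity information obtained above in the proof of Theorem \ref{Thm-scalar}. Since $\nabla_\Sigma w=0$, the gradient term and $\Delta_\Sigma u$ drop out. We have $H=-\gamma w_\nu$. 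By the Schoen--Yau rewriting,
\[
|A|^2+\operatorname{Ric}(\nu,\nu)=\tfrac12\left(|A|^2+H^2+\operatorname{Sc}_g-\operatorname{Sc}_\Sigma\right),
\]
with $\operatorname{Sc}_\Sigma=0$ and $|A|^2=H^2/(n-1)$. Finally, the equality case $\gamma u^{-1}\Delta u=\tfrac12\operatorname{Sc}_g-\tfrac{\gamma(\gamma-3)}{2}w_\nu^2$ holds on $\Sigma$. Substituting, the zeroth-order coefficient becomes a quadratic multiple of $w_\nu^2$:
\[
\left[\frac{\gamma(\gamma-3)}{2}\cdot(-1)+\gamma^2-\gamma-\frac{n}{2(n-1)}\gamma^2\right]w_\nu^2 .
\]
I expect this to vanish, mirroring the stability computation where the same combination cancelled exactly. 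In the worst case it is some multiple $c\,w_\nu^2$, and $w_\nu$ is constant on $\Sigma$ because $H$ is constant, which follows from umbilicity plus the Codazzi equation, or alternatively from the warped minimality and $\nabla_\Sigma w=0$ only if $w_\nu$ is constant.

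The main obstacle is exactly this bookkeeping: verifying that the zeroth-order term is constant, and preferably zero, so that the linearized operator is $-\Delta_\Sigma$ (or $-\Delta_\Sigma+c$ with $c\ge 0$). Here the first check is the arithmetic
\[
-\frac{\gamma(\gamma-3)}{2}+\gamma^2-\gamma-\frac{\gamma n}{2}=\frac{\gamma}{2}\bigl(3-\gamma+2\gamma-2-n\bigr)=\frac{\gamma}{2}(\gamma+1-n)=0,
\]
so $c=0$ and the argument closes exactly as in Lemma \ref{Lem-foliation-construct}. Should a nonzero constant appear in a variant, I would instead use that $c\ge 0$ still gives invertibility together with the mean constraint, the mean condition again detecting the constants.
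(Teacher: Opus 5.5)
Your proposal is correct and is essentially the paper's proof. It uses the same map $\tilde\Phi$ into $\hat C^{\alpha}(\Sigma)\times\mathbb{R}$ and the same linearization: $\nabla_\Sigma w=0$, $H=-\gamma w_\nu$, $|A|^2=H^2/(n-1)$, $\operatorname{Sc}_\Sigma=0$ and equality in the spectral scalar bound reduce the zeroth-order coefficient to $\frac{\gamma}{2}(\gamma+1-n)w_\nu^2=0$. Hence $D\tilde\Phi|_{\rho=0}$ is $\psi\mapsto\bigl(-\Delta_\Sigma\psi,\ \frac{1}{\operatorname{vol}(\Sigma)}\int_\Sigma\psi\,dv\bigr)$ and the inverse function theorem applies.

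Two small corrections. Your fallback paragraph is unnecessary, and its claim that umbilicity plus Codazzi makes $H$ constant is false in a general ambient manifold: Codazzi only gives $\frac{n-2}{n-1}\nabla_\Sigma H=\pm\operatorname{Ric}(\nu,\cdot)^{T}$. Also, $\partial_t\rho_t|_{t=0}\equiv 1$ follows from differentiating the full equation $\tilde\Phi(\rho_t)=(0,t)$, which forces $\partial_t\rho_t|_{t=0}$ to be harmonic and hence constant; the mean constraint alone does not give it.
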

\begin{proof}
  Let
  \[\hat{C}^{\alpha}(\Sigma)=\left\{\phi\in C^{\alpha}(\Sigma): \text{ }\int_{\Sigma}\phi dv =0\right\}\]
  for some $\alpha\in(0,1)$.
  Consider the map
  \begin{eqnarray*}
    \tilde{\Phi}:C^{2,\alpha}(\Sigma)&\to& \hat{C}^{\alpha}(\Sigma)\times \mathbb{R},\\
     \rho&\mapsto& \left(H_{\rho}+\gamma\omega_{\nu}-\frac{1}{\operatorname{vol}(\Sigma)}{\int_{\Sigma}\left(H_{\rho}+\gamma\omega_{\nu}\right)dv}, \frac{1}{\operatorname{vol}(\Sigma)}{\int_{\Sigma}\rho dv}\right),
  \end{eqnarray*}
  where $H_{\rho}$ is the mean curvature of the graph over $\Sigma$ with graph function $\rho$.

  We calculate the first variation of $H + \gamma u^{- 1} u_{\nu} $ at $\rho=0$:
\begin{align*}\label{subsec-rigidity-ana-scalar}
  & \delta_{\phi \nu} (H + \gamma u^{- 1} u_{\nu}) \\
  = & - \Delta_{\Sigma} \phi - (\ensuremath{\operatorname{Ric}}(\nu,\nu) + |A|^2)
  \phi - \gamma u^{- 2} u_{\nu}^2 \phi - \gamma u^{- 1} \langle \nabla_{\Sigma} u, \nabla_{\Sigma} \phi \rangle \\
  & \quad + \gamma u^{- 1} (\Delta u - \Delta_{\Sigma} u - H u_{\nu}) \phi \\
  = & - \Delta_{\Sigma} \phi -\frac{1}{2}\left(\operatorname{Sc}_{g}-\operatorname{Sc}_{\Sigma}+H^2+|A|^2\right) \phi - \gamma w_{\nu}^2 \phi \\
  & \quad + \gamma u^{- 1} \phi \Delta u - \gamma u^{- 1} \phi
  \Delta_{\Sigma} u - \gamma w_{\nu} ( - \gamma w_{\nu}) \phi \\
  = & - \Delta_{\Sigma} \phi  -\frac{\gamma(\gamma-3)}{2}w_{\nu}^2\phi- \frac{n}{2(n-1)}\gamma^2 (w_{\nu})^2 \phi -  \gamma w_{\nu}^2 \phi+\gamma^2w_{v}^2\phi\\
  = & - \Delta_{\Sigma} \phi .
\end{align*}
Then, the linearization of $\tilde{\Phi}$
at $\rho=0$
given by
  \begin{eqnarray*}
    D\tilde{\Phi}|_{\rho=0}: C^{2,\alpha}(\Sigma)\to \hat{C}^{\alpha}(\Sigma)\times \mathbb{R},\quad \psi\mapsto \left(-\Delta_{\Sigma} \psi,\frac{1}{\operatorname{vol}(\Sigma)}\int_{\Sigma}\psi dv\right),
  \end{eqnarray*}
  is invertible. By the inverse function theorem, we can find a family of functions $\rho_{t}:\Sigma\to \mathbb{R}$ with $t\in (-\epsilon,\epsilon)$ with the following properties:
  \begin{enumerate}
    \item the function $\rho_{t}$ satisfies $\rho_{0}\equiv0$,
    \begin{eqnarray}\label{Eqn-construct-rho-scalar}
      \left.\frac{\partial \rho_{t}}{\partial t}\right|_{t=0}\equiv 1,\quad \text{and}\quad \frac{1}{\operatorname{vol}(\Sigma)}\int_{\Sigma}\rho_{t} dv =t
    \end{eqnarray}
  \item the graphs $\Sigma_{t}$ over $\Sigma$ with the graph function $\rho_{t}$ is of constant \( H + \gamma u^{-1} u_{\nu} \).
  \end{enumerate}
  From (\ref{Eqn-construct-rho-scalar}), with the value of $\epsilon$ decreased a little bit, the speed $\partial_t\rho_{t}$ will be positive for \( t \in (-\epsilon, \epsilon) \),
from which it follows that the graphs $\{\Sigma_{t}\}_{t\in (-\epsilon,\epsilon)}$ form a foliation around $\Sigma$.
\end{proof}

\begin{lemma}\label{lem-minimizer-scalar}
  $\Sigma_{t}$ is also a stable warped area-minimizing hypersurface.
\end{lemma}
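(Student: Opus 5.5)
The plan follows the spectral Ricci case: first show that $\tilde H(t)=H_t+\gamma w_{\nu_t}$ is monotone along the foliation of Lemma \ref{Lem-foliation-construct-scalar}, then use the first variation of $E$ to show each $\Sigma_t$ is again a minimizer. Let $\Phi$ parametrize the foliation and set $\phi_t=\langle\partial_t\Phi,\nu_t\rangle$. Since $\phi_0\equiv1$, shrinking $\varepsilon$ gives $\phi_t>0$.

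\textbf{Step 1: the evolution inequality for $\tilde H$.} Using the first variation formula for $H+\gamma u^{-1}u_\nu$, I write
\begin{align*}
-\tilde H'(t)\phi_t^{-1}=\phi_t^{-1}\Delta_{\Sigma_t}\phi_t+\operatorname{Ric}(\nu_t,\nu_t)+|A_t|^2+\gamma w_{\nu_t}^2-\gamma u^{-1}(\Delta_g u-\Delta_{\Sigma_t}u-H_tu_{\nu_t})+\gamma u^{-1}\langle\nabla_{\Sigma_t}u,\nabla_{\Sigma_t}\phi_t\rangle\phi_t^{-1}.
\end{align*}
I then make three substitutions.
\begin{enumerate}
\item Replace $\operatorname{Ric}(\nu_t,\nu_t)+|A_t|^2$ via the Schoen--Yau identity $\tfrac12(\operatorname{Sc}_g-\operatorname{Sc}_{\Sigma_t}+H_t^2+|A_t|^2)$.
\item Bound $-\gamma u^{-1}\Delta_g u+\tfrac12\operatorname{Sc}_g$ from below by (\ref{ineqn-spectral-scalar}), which gives $\tfrac{\gamma(\gamma-3)}2(|\nabla_{\Sigma_t}w|^2+w_{\nu_t}^2)$.
\item Use $|A_t|^2\ge H_t^2/(n-1)$.
\end{enumerate}
The tangential terms regroup exactly as in the proof of Proposition \ref{Prop-monotonic-formula}:
\[
\operatorname{div}_{\Sigma_t}\!\Big(\tfrac{\nabla\phi_t}{\phi_t}+\gamma\nabla w\Big)+(1-\tfrac\gamma4)\Big|\tfrac{\nabla\phi_t}{\phi_t}\Big|^2+\gamma\Big|\nabla w-\tfrac{\nabla\phi_t}{2\phi_t}\Big|^2+\tfrac{\gamma(\gamma-3)}2|\nabla_{\Sigma_t}w|^2 .
\]
The term $-\tfrac12\operatorname{Sc}_{\Sigma_t}$ remains.

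\textbf{Step 2: handling the curvature and gradient terms.} This is where the argument departs from the Ricci case, and I expect it to be the main obstacle. The coefficient $1-\gamma/4$ may be negative, and $-\tfrac12\operatorname{Sc}_{\Sigma_t}$ has no sign pointwise. Rather than dropping terms pointwise, I would multiply by a suitable power $\phi_t^{\beta}$ before integrating, or equivalently integrate against a test function. The goal is an integrated inequality
\[
-\tilde H'\int_{\Sigma_t}\phi_t^{-1}\ \ge\ \int_{\Sigma_t}\Big(c|\nabla\log\phi_t|^2\text{-type terms}-\tfrac12\operatorname{Sc}_{\Sigma_t}\Big)+\tilde H\cdot(\text{bounded}).
\]
The key is the conformal operator $-\tfrac{2(n-2)}{n-3}\Delta+\tfrac12\operatorname{Sc}$ appearing in Theorem \ref{Thm-scalar}. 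Because $\Sigma_t$ is diffeomorphic to $\Sigma$ and so carries no positive scalar curvature metric, this operator has nonpositive first eigenvalue on $\Sigma_t$.

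The delicate point is the direction of the inequality. What I actually need is only the sign of $\tilde H'$ relative to $\tilde H$, so I plan to argue by contradiction, as in Bray--Brendle--Neves and Chai. Suppose, for some $t>0$, that $\tilde H(t)>0$ somewhere in the region where the ODE fails. Then the conformal Laplacian quadratic form of $\Sigma_t$, tested with an appropriate power of $\phi_t$, would be strictly positive. That forces a PSC metric on $\Sigma_t\cong\Sigma$, a contradiction. If this route is too weak, I would instead quote the first eigenvalue bound $\lambda_1\le0$ directly: choose the first eigenfunction, multiply, and integrate.

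The outcome I aim for is $\tilde H'+\Psi(t)\tilde H\le0$, where $\Psi(t)=(n-3)\big(\int\phi_t^{-1}\big)^{-1}\int w_{\nu_t}$ is continuous. The linear term $\tfrac{n-3}{n-1}\gamma\tilde H w_{\nu_t}$ arises exactly as in Proposition \ref{Prop-monotonic-formula}, after substituting $H_t=\tilde H-\gamma w_{\nu_t}$.

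\textbf{Step 3: conclusion.} Integrating the ODE, $\tilde H(0)=0$ gives $\tilde H\le0$ for $t\ge0$ and $\tilde H\ge0$ for $t\le0$. Then the first variation formula shows that $E(\Sigma_t)$ is nonincreasing in $|t|$. Combined with minimality of $\Sigma_0$ in the homology class (each $\Sigma_t$ is homologous to $\Sigma$), this yields $E(\Sigma_t)=E(\Sigma_0)$. Hence every $\Sigma_t$ is a warped area-minimizing, and therefore stable, hypersurface in $[\Sigma]$, and all the pointwise conclusions of Theorem \ref{Thm-scalar} hold on each $\Sigma_t$.
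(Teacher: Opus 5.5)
Your architecture matches the paper's: a differential inequality for $\tilde H$, ODE comparison from $\tilde H(0)=0$, then the first variation plus minimality of $\Sigma_0$ to get $E(\Sigma_t)=E(\Sigma_0)$. Steps 1 and 3 are fine in outline. The gap is Step 2, which carries the entire content of the lemma and which you only sketch.

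Your contradiction route does not work as stated. Suppose the inequality for $\tilde H$ fails at some $t$ when tested against one particular weight, such as a power of $\phi_t$. Then you only learn that
\[
Q_t(\psi)=\int_{\Sigma_t}\left(\tfrac{2(n-2)}{n-3}|\nabla_{\Sigma_t}\psi|^2+\tfrac12\operatorname{Sc}_{\Sigma_t}\psi^2\right)
\]
is positive at that one function. This does not give $\lambda_1>0$, and hence no PSC metric. The sign of $\tilde H(t)$, a constant on $\Sigma_t$, plays no role in that step; it is only read off afterwards from the ODE. What is needed is your fallback: take as weight the first eigenfunction $\psi_t>0$ of $-\tfrac{2(n-2)}{n-3}\Delta_{\Sigma_t}+\tfrac12\operatorname{Sc}_{\Sigma_t}$. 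You must then supply the estimate that makes this work. Write the pointwise inequality as $-\tilde H'\phi_t^{-1}\ge G-\tilde H w_{\nu_t}$, where $G=\phi_t^{-1}\Delta_{\Sigma_t}\phi_t+\gamma u^{-1}\Delta_{\Sigma_t}u+\gamma\langle\nabla_{\Sigma_t} w,\nabla_{\Sigma_t}\log\phi_t\rangle+\tfrac{\gamma(\gamma-3)}{2}|\nabla_{\Sigma_t}w|^2-\tfrac12\operatorname{Sc}_{\Sigma_t}$. One must show $\int_{\Sigma_t}G\psi^2\ge-Q_t(\psi)$ for every $\psi>0$.

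The paper does this by setting $\phi_t=u^{-\gamma/2}e^{\xi_t}$. This cancels the $\langle\nabla w,\nabla\xi_t\rangle$ cross terms and gives $G=\Delta\xi_t+|\nabla\xi_t|^2+\tfrac{\gamma}{2}\Delta w+\tfrac{\gamma(\gamma-2)}{4}|\nabla w|^2-\tfrac12\operatorname{Sc}_{\Sigma_t}$. Then $\int(\Delta\xi_t+|\nabla\xi_t|^2)\psi^2\ge-\int|\nabla\psi|^2$. Integrating $\tfrac{\gamma}{2}\Delta w$ by parts and completing the square in $\psi\nabla w$ with the positive coefficient $\tfrac{\gamma(\gamma-2)}{4}$ costs $\tfrac{\gamma}{\gamma-2}\int|\nabla\psi|^2$; this is where $n\ge4$ enters. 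The total is $1+\tfrac{\gamma}{\gamma-2}=\tfrac{2(n-2)}{n-3}$. This also removes your worry about $1-\tfrac{\gamma}{4}<0$, which is an artifact of the Ricci-case regrouping. With $\lambda_1(\Sigma_t)\le0$ one obtains $\tilde H'+\Psi\tilde H\le0$, where $\Psi(t)=-\int_{\Sigma_t}w_{\nu_t}\psi_t^2\big/\int_{\Sigma_t}\psi_t^2\phi_t^{-1}$. This $\Psi$ is continuous because the normalized first eigenfunction varies continuously in $t$.

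The outcome you announce is copied from the Ricci case and is wrong here. The Schoen--Yau rewriting gives $\tfrac{n}{2(n-1)}H_t^2$ rather than $\tfrac{1}{n-1}H_t^2$. So after substituting $H_t=\tilde H-\gamma w_{\nu_t}$, the $w_{\nu_t}^2$ terms still cancel, but the linear term is $-\tilde H w_{\nu_t}$, not $\tfrac{n-3}{n-1}\gamma\tilde H w_{\nu_t}$. Your $\Psi$ also corresponds to unweighted integration, which cannot control $\operatorname{Sc}_{\Sigma_t}$. Any continuous $\Psi$ suffices for Step 3, so this error is harmless in itself, but it shows the decisive computation was not done.
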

\begin{proof}
    The first step is to prove that $H_{t}+\gamma\omega_{\nu_{t}}=0$ on $\Sigma_{t}$ for $t\in (-\epsilon,\epsilon)$. Then the stable warped area-minimizing property of $\Sigma_{t}$ follows. We do that in Proposition \ref{Prop-monotonic-formula-scalar} and \ref{Prop-minimiser-scalar}.
\end{proof}

\begin{proposition}\label{Prop-monotonic-formula-scalar}
There exists a continuous function $\Psi (t)$ such
  that
  \begin{equation}
    \tfrac{\mathrm{d}}{\mathrm{d} t} \left( \exp \left(\int_0^t \Psi (\tau)
    \mathrm{d} \tau\right) \tilde{H} \right) \leq 0 \label{eq:H tilde inequality-scalar}
  \end{equation}
   where
  \begin{equation}
    \tilde{H}(t) = H_{t} + \gamma w_{\nu_{t}}. \label{H tilde2}
  \end{equation}
\end{proposition}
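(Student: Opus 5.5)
The plan is to copy the structure of Proposition \ref{Prop-monotonic-formula}: derive a differential inequality $\tilde H'+\Psi(t)\tilde H\le 0$ on the foliation of Lemma \ref{Lem-foliation-construct-scalar}, then integrate it. The new feature is the intrinsic scalar curvature term coming from the Schoen--Yau rewriting of the Gauss equation. Here $\phi_t$ is positive by shrinking $\varepsilon$, since $\phi_0\equiv 1$.

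\textbf{Pointwise inequality.} Start from the first variation formula for $-\tilde H'(t)$, exactly as in the proof of Proposition \ref{Prop-monotonic-formula}. Replace $|A_t|^2+\operatorname{Ric}(\nu_t,\nu_t)$ by $\tfrac12(|A_t|^2+H_t^2+\operatorname{Sc}_g-\operatorname{Sc}_{\Sigma_t})$. Then:
\begin{itemize}
\item use $|A_t|^2\ge H_t^2/(n-1)$;
\item use the spectral bound \eqref{ineqn-spectral-scalar} for $-\gamma u^{-1}\Delta_g u+\tfrac12\operatorname{Sc}_g$;
\item split $u^{-2}|\nabla u|^2=|\nabla_{\Sigma_t}w|^2+w_{\nu_t}^2$;
\item substitute $H_t=\tilde H-\gamma w_{\nu_t}$.
\end{itemize}
Multiplied by $\phi_t^{-1}$, this should give
\begin{align*}
-\tilde H'\phi_t^{-1}\ \ge\ &\phi_t^{-1}\Delta_{\Sigma_t}\phi_t+\gamma u^{-1}\Delta_{\Sigma_t}u+\gamma\langle\nabla_{\Sigma_t}w,\nabla_{\Sigma_t}\phi_t\rangle\phi_t^{-1}+c_1|\nabla_{\Sigma_t}w|^2\\
&-\tfrac12\operatorname{Sc}_{\Sigma_t}+\tfrac{n}{2(n-1)}\tilde H^2+c_2\tilde H w_{\nu_t}.
\end{align*}
Here $c_1,c_2$ are explicit constants. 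The $w_{\nu_t}^2$ terms cancel identically, by the same algebra that made the linearization in Lemma \ref{Lem-foliation-construct-scalar} equal to $-\Delta_\Sigma$. I will then drop $\tilde H^2\ge 0$.

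\textbf{Integration against an eigenfunction.} Unlike the Ricci case, I cannot simply integrate the last display over $\Sigma_t$, because $-\tfrac12\operatorname{Sc}_{\Sigma_t}$ has no sign. Instead I multiply by $\eta^2$, where $\eta>0$ is the first eigenfunction on $\Sigma_t$ of
\[
L_t=-\tfrac{2(n-2)}{n-3}\Delta_{\Sigma_t}+\tfrac12\operatorname{Sc}_{\Sigma_t},
\]
with eigenvalue $\lambda_t$. Since $\Sigma_t$ is isotopic, hence diffeomorphic, to $\Sigma$, it carries no metric of positive scalar curvature. The conformal-change argument already used in the proof of Theorem \ref{Thm-scalar} then forces $\lambda_t\le 0$; this is where $n\ge 4$ enters, through $\tfrac{2(n-2)}{n-3}\ge\tfrac{4(n-2)}{n-3}\cdot\tfrac{1}{2}$ for the conformal Laplacian. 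Hence
\[
-\int_{\Sigma_t}\tfrac12\operatorname{Sc}_{\Sigma_t}\eta^2\ \ge\ \tfrac{2(n-2)}{n-3}\int_{\Sigma_t}|\nabla\eta|^2 .
\]
For the divergence-type part, I integrate by parts against $\eta^2$. This produces cross terms in $\nabla\eta$, $\nabla\log\phi_t$ and $\nabla w$. Completing squares exactly as in the stability computation (the identity with coefficient $\tfrac{\gamma(2-\gamma)}{4}$), the gradient budget $\tfrac{2(n-2)}{n-3}|\nabla\eta|^2$ should be enough to absorb all cross terms into nonnegative squares. The constant $\tfrac{2(n-2)}{n-3}$ was chosen in the proof of Theorem \ref{Thm-scalar} precisely to make this work.

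\textbf{Conclusion and main obstacle.} The outcome should be
\[
-\tilde H'(t)\int_{\Sigma_t}\eta^2\phi_t^{-1}\ \ge\ \tilde H(t)\int_{\Sigma_t}c_2 w_{\nu_t}\eta^2 .
\]
Setting $\Psi(t)=\bigl(\int\eta^2\phi_t^{-1}\bigr)^{-1}\int c_2w_{\nu_t}\eta^2$ gives $\tilde H'+\Psi\tilde H\le0$, which is \eqref{eq:H tilde inequality-scalar}. $\Psi$ is continuous because the simple first eigenfunction, normalized, depends continuously on $t$.

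I expect the main obstacle to be this square-completion step. It has to handle simultaneously the $\nabla\log\phi_t$ terms, which were free in the Ricci case where I integrated against $1$, and the $\nabla\eta$ terms. If the constants do not close, I would first try the test function $\eta^2\phi_t^{\beta}$ and optimize over $\beta$, or replace $\eta$ by the first eigenfunction of a slightly larger multiple of $-\Delta$, keeping $\lambda_t\le0$.
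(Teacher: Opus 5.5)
Your proposal is correct and is essentially the paper's argument. The paper multiplies the pointwise inequality by a positive test function $\varphi_t$ and gets the existence of a suitable $\varphi_t$ by contradiction from the no-PSC hypothesis; in your notation this is exactly $\varphi_t=\eta_t^2/\phi_t$, with $\eta_t$ the first eigenfunction of $-\tfrac{2(n-2)}{n-3}\Delta_{\Sigma_t}+\tfrac12\operatorname{Sc}_{\Sigma_t}$, and making this choice explicit has the advantage that continuity of $\Psi$ becomes clear. The square completion you flag as the main obstacle closes with no slack: write $\xi_t=\log\phi_t+\tfrac{\gamma}{2}w$, as the paper does via $\phi_t=u^{-\gamma/2}e^{\xi_t}$; then the integrand becomes $\Delta_{\Sigma_t}\xi_t+|\nabla_{\Sigma_t}\xi_t|^2+\tfrac{\gamma}{2}\Delta_{\Sigma_t}w+\tfrac{\gamma(\gamma-2)}{4}|\nabla_{\Sigma_t}w|^2$, and the two completed squares cost exactly $\bigl(1+\tfrac{\gamma}{\gamma-2}\bigr)\int_{\Sigma_t}|\nabla\eta_t|^2=\tfrac{2(n-2)}{n-3}\int_{\Sigma_t}|\nabla\eta_t|^2$, so none of your fallbacks is needed.
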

\begin{proof}
    Let $\Phi : \Sigma \times (- \varepsilon,
\varepsilon) \to M$ parametrize the local foliation, $Y = \tfrac{\partial
\Phi}{\partial t}$, and $\phi_t = \langle Y, \nu_t \rangle$. Since we have shown
that $\phi_0$ is a constant. We can fix $\varepsilon$ sufficiently small so that
$\phi_t > 0$ for all $t \in (- \varepsilon, \varepsilon)$. Recall that the first variation gives
\begin{align*}
  - \tilde{H}' (t) = & - \tfrac{\mathrm{d}}{\mathrm{d} t} (H_t + \gamma w_{\nu_{t}})
  \\
  = & \Delta_{\Sigma_t} \phi_t + (\ensuremath{\operatorname{Ric}}(\nu_t,\nu_t) +
  |A_t |^2) \phi_t + \gamma u^{- 2} u_{\nu_{t}}^2 \phi_t \\
  & \quad - \gamma u^{- 1} (\Delta_{g} u - \Delta_{\Sigma_t} u - H_{t} u_{\nu_{t}})
  \phi_t + \gamma u^{- 1} \langle \nabla_{\Sigma_t} u, \nabla_{\Sigma_t}
  \phi_t \rangle  \\
  = & \Delta_{\Sigma_t} \phi_t + \frac{1}{2}\left(\operatorname{Sc}_{g}-\operatorname{Sc}_{\Sigma_{t}}+H_{t}^2+|A_{t}|^2\right) \phi_t + \gamma u^{- 2} u_{\nu_{t}}^2 \phi_t \\
  & \quad - \gamma u^{- 1} (\Delta_{g} u - \Delta_{\Sigma_t} u - H_{t} u_{\nu_{t}})
  \phi_t + \gamma u^{- 1} \langle \nabla_{\Sigma_t} u, \nabla_{\Sigma_t}
  \phi_t \rangle
\end{align*}%
Using the spectral scalar curvature bound and (\ref{H tilde2}) with suitable grouping of terms, we see
\begin{align*}
  & - \tilde{H}' (t) \phi_t^{- 1} \\
  \geq & (\phi_t^{- 1} \Delta_{\Sigma_t} \phi_t + \gamma  u^{- 1}
  \Delta_{\Sigma_t} u + \gamma u^{- 1} \langle \nabla_{\Sigma_t} u,
  \nabla_{\Sigma_t} \phi_t \rangle \phi_t^{- 1}) -\tfrac{1}{2}\operatorname{Sc}_{\Sigma_{t}}\\
  & \quad  +\frac{n}{2(n-1)} H_{t}^2 + \gamma w_{\nu_{t}}^2 + \gamma H_{t} w_{\nu_{t}} +\frac{\gamma(\gamma-3)}{2}u^{-2}|\nabla u|^2\\
  \geq & (\phi_t^{- 1} \Delta_{\Sigma_t} \phi_t + \gamma  u^{- 1}
  \Delta_{\Sigma_t} u + \gamma u^{- 1} \langle \nabla_{\Sigma_t} u,
  \nabla_{\Sigma_t} \phi_t \rangle \phi_t^{- 1}) +\frac{\gamma(\gamma-3)}{2}u^{-2}|\nabla_{\Sigma_{t}} u|^2\\
  & \quad -\tfrac{1}{2}\operatorname{Sc}_{\Sigma_{t}} +\frac{n}{2(n-1)} \tilde{H}^2(t) + \tilde{H}(t)\left(\gamma  w_{\nu_{t}}-\frac{n\gamma}{n-1}w_{\nu_{t}}\right)\\
  &+ \gamma w_{\nu_{t}}^2 +\frac{n\gamma^2}{2(n-1)}w_{\nu_{t}}^2+\frac{\gamma(\gamma-3)}{2}w_{\nu_{t}}^2-\gamma^2w_{\nu_{t}}^2\\
  \geq & (\phi_t^{- 1} \Delta_{\Sigma_t} \phi_t + \gamma  u^{- 1}
  \Delta_{\Sigma_t} u + \gamma u^{- 1} \langle \nabla_{\Sigma_t} u,
  \nabla_{\Sigma_t} \phi_t \rangle \phi_t^{- 1})+\frac{\gamma(\gamma-3)}{2}u^{-2}|\nabla_{\Sigma_{t}} u|^2\\
  &-\tfrac{1}{2}\operatorname{Sc}_{\Sigma_{t}} + \tilde{H}(t)\left(- w_{\nu_{t}}\right).
\end{align*}
Then
\begin{align*}
  &  -\phi_{t}^{-1}\left(\Delta_{\Sigma_t} \phi_t + \gamma \phi_t  u^{- 1}
  \Delta_{\Sigma_t} u + \gamma u^{- 1} \langle \nabla_{\Sigma_t} u,
  \nabla_{\Sigma_t} \phi_t \rangle\right)-\frac{\gamma(\gamma-3)}{2}|\nabla_{\Sigma} w|^2  \\
  \geq &  \phi_t^{-1}\tilde{H}' (t) - \tilde{H}(t) w_{\nu_{t}}-\tfrac{1}{2}\operatorname{Sc}_{\Sigma_{t}}.
\end{align*}
Let $\phi_{t}=u^{-\frac{\gamma}{2}}e^{\xi_{t}}$ for some positive smooth function $\xi_{t}$ on $\Sigma_{t}$. Then
\begin{align*}
    &\phi_{t}^{-1}\left(\Delta_{\Sigma_t} \phi_t + \gamma \phi_t  u^{- 1}
  \Delta_{\Sigma_t} u + \gamma u^{- 1} \langle \nabla_{\Sigma_t} u,
  \nabla_{\Sigma_t} \phi_t \rangle\right)\\
  =&\tfrac{\gamma(\gamma+2)}{4}u^{-2}|\nabla_{\Sigma_{t}}u|^2-\tfrac{\gamma}{2}u^{-1}\Delta_{\Sigma_{t}}u-\gamma \langle \nabla_{\Sigma_{t}}u,\nabla_{\Sigma_{t}}\xi_{t}\rangle u^{-1}\\
&+\Delta_{\Sigma_{t}}\xi_{t}+|\nabla_{\Sigma_{t}}\xi_{t}|^2+\gamma u^{-1}\Delta_{\Sigma_t} u\\
  &-\tfrac{\gamma^2}{2} u^{-1}\langle \nabla_{\Sigma_{t}}u,\nabla_{\Sigma_{t}}u\rangle u^{-1} +\gamma u^{-1}\langle \nabla_{\Sigma_{t}}u,\nabla_{\Sigma_{t}}\xi_{t}\rangle\\
  =&\tfrac{\gamma(2-\gamma)}{4}u^{-2}|\nabla_{\Sigma_{t}}u|^2+\tfrac{\gamma}{2}u^{-1}\Delta_{\Sigma_{t}}u+\Delta_{\Sigma_{t}}\xi_{t}+|\nabla_{\Sigma_{t}}\xi_{t}|^2\\
  =&\tfrac{\gamma(4-\gamma)}{4}|\nabla_{\Sigma_{t}}w|^2+\tfrac{\gamma}{2}\Delta_{\Sigma_{t}}w+\Delta_{\Sigma_{t}}\xi_{t}+|\nabla_{\Sigma_{t}}\xi_{t}|^2.
\end{align*}
Hence
\begin{align*}
    &-\phi_{t}^{-1}\left(\Delta_{\Sigma_t} \phi_t + \gamma \phi_t  u^{- 1}
  \Delta_{\Sigma_t} u + \gamma u^{- 1} \langle \nabla_{\Sigma_t} u,
  \nabla_{\Sigma_t} \phi_t \rangle\right)-\frac{\gamma(\gamma-3)}{2}|\nabla_{\Sigma_{t}} w|^2\\
  =&-\tfrac{\gamma(\gamma-2)}{4}|\nabla_{\Sigma_{t}}w|^2-\tfrac{\gamma}{2}\Delta_{\Sigma_{t}}w-\Delta_{\Sigma_{t}}\xi_{t}-|\nabla_{\Sigma_{t}}\xi_{t}|^2.
\end{align*}
Therefore,
\begin{align*}
    \tilde{H}' (t) - \tilde{H}(t) w_{\nu_{t}}\phi_t\leq \left(-\tfrac{\gamma(\gamma-2)}{4}|\nabla_{\Sigma_{t}}w|^2-\tfrac{\gamma}{2}\Delta_{\Sigma_{t}}w-\Delta_{\Sigma_{t}}\xi_{t}-|\nabla_{\Sigma_{t}}\xi_{t}|^2+\tfrac{1}{2}\operatorname{Sc}_{\Sigma_{t}}\right)\phi_{t}.
\end{align*}
We can find a positive function $\varphi_{t}(x)\in C^{\infty}(\Sigma_{t})$ for $t\in(-\epsilon,\epsilon)$ such that
\begin{align*}
    &\tilde{H}' (t) - \tilde{H}(t)\frac{\int_{\Sigma_{t}} w_{\nu_{t}}\phi_t\varphi_{t}}{\int_{\Sigma_{t}}\varphi_{t}}\\
    \leq& \frac{\int_{\Sigma_{t}}\left(-\tfrac{\gamma(\gamma-2)}{4}|\nabla_{\Sigma_{t}}w|^2-\tfrac{\gamma}{2}\Delta_{\Sigma_{t}}w-\Delta_{\Sigma_{t}}\xi_{t}-|\nabla_{\Sigma_{t}}\xi_{t}|^2+\tfrac{1}{2}\operatorname{Sc}_{\Sigma_{t}}\right)\phi_{t}\varphi_{t}}{\int_{\Sigma_{t}}\varphi_{t}}\\
    \leq& 0.
\end{align*}
Otherwise, for any positive function $\varphi_{t}$ in $C^{\infty}(\Sigma_{t})$,
\[\int_{\Sigma_{t}}\left(-\tfrac{\gamma(\gamma-2)}{4}|\nabla_{\Sigma_{t}}w|^2-\tfrac{\gamma}{2}\Delta_{\Sigma_{t}}w-\Delta_{\Sigma_{t}}\xi_{t}-|\nabla_{\Sigma_{t}}\xi_{t}|^2+\tfrac{1}{2}\operatorname{Sc}_{\Sigma_{t}}\right)\phi_{t}\varphi_{t}>0.\]
Let $\psi^2_{t}=\phi_{t}\varphi_{t}$, then
\begin{align*}
  & \int_{\Sigma_{t}}\left( -\Delta_{\Sigma_{t}}\xi_{t}-|\nabla_{\Sigma_{t}}\xi_{t}|^2\right)\psi^2_{t}\\
  =&\int_{\Sigma_{t}}\left[-|\nabla_{\Sigma_{t}}\xi_{t}|^2\psi^2_{t}-2\langle \nabla_{\Sigma_{t}}\psi,\psi\nabla_{\Sigma_{t}}\xi_{t}\rangle+\operatorname{div}_{\Sigma_{t}}\left(\psi^2\nabla_{\Sigma_{t}}\xi_{t}\right)\right]\\
  \leq&\int_{\Sigma_{t}}|\nabla_{\Sigma_{t}}\psi_{t}|^2.
\end{align*}
We have
\begin{align*}
    & \int_{\Sigma_{t}}\left(|\nabla_{\Sigma_{t}}\psi_{t}|^2+\tfrac{1}{2}\operatorname{Sc}_{\Sigma}\psi_{t}^2\right)+\int_{\Sigma}\gamma\psi_{t}\langle \nabla_{\Sigma_{t}}w,\nabla_{\Sigma_{t}}\psi_{t}\rangle+\int_{\Sigma_{t}}\tfrac{\gamma(2-\gamma)}{4}|\nabla_{\Sigma_{t}}w|^2 > 0.
\end{align*}
Then
\begin{align}
\int_{\Sigma} \left[\frac{2(n-2)}{n-3}| \nabla_{\Sigma_{t}} \psi_{t} |^2 +\frac{1}{2}\operatorname{Sc}_{\Sigma_{t}}\psi_{t}^2\right]>0,
\end{align}
which is a contradiction with $\Sigma_{t}$ admitted no positive scalar curvature metric. Therefore, there exists $\varphi_{t}$ such that
\begin{align*}
    &\tilde{H}' (t) - \tilde{H}(t)\frac{\int_{\Sigma_{t}} w_{\nu_{t}}\phi_t\varphi_{t}}{\int_{\Sigma_{t}}\varphi_{t}}    \leqslant 0.
\end{align*}
We set $\Psi (t) = -\frac{\int_{\Sigma_{t}} w_{\nu_{t}}\phi_t\varphi_{t}}{\int_{\Sigma_{t}}\varphi_{t}}$, then
\[ \tilde{H}' + \Psi (t) \tilde{H} \leq 0. \]
By solving this inequality, we finish the proof of the Proposition.
\end{proof}

\begin{proposition}\label{Prop-minimiser-scalar}
  Every $\Sigma_t$ is a stable warped area-minimizing hypersurface.
\end{proposition}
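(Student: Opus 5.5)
The argument mirrors Proposition \ref{Prop-minimiser} in the spectral Ricci case, with Proposition \ref{Prop-monotonic-formula-scalar} now supplying the monotonicity. Let $\partial_t$ be the variation field of the foliation $\{\Sigma_t\}_{t\in(-\varepsilon,\varepsilon)}$ from Lemma \ref{Lem-foliation-construct-scalar}, and set $\phi_t=\langle\partial_t,\nu_t\rangle$. Shrinking $\varepsilon$, we may assume $\phi_t>0$. By the first variation formula \eqref{eq first var}, and since $\tilde H(t)$ is constant on $\Sigma_t$,
\[
\tfrac{\mathrm d}{\mathrm dt}E(\Sigma_t)=\tilde H(t)\int_{\Sigma_t}u^{\gamma}\phi_t .
\]

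\textbf{Sign of $\tilde H$.} Since $\Sigma_0=\Sigma$ is warped minimal, $\tilde H(0)=0$. Inequality \eqref{eq:H tilde inequality-scalar} says that $e^{\int_0^t\Psi}\tilde H$ is nonincreasing. Hence $\tilde H(t)\le 0$ for $t\ge0$ and $\tilde H(t)\ge0$ for $t\le0$. Integrating the displayed formula then gives $E(\Sigma_t)\le E(\Sigma_0)$ for all $|t|<\varepsilon$.

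\textbf{Equality and minimality.} Each $\Sigma_t$ is a graph over $\Sigma$, so it is homologous to $\Sigma$ and lies in the class $[\Sigma]$. Since $\Sigma$ minimizes $E$ in its class, $E(\Sigma_t)\ge E(\Sigma_0)$. Therefore $E(\Sigma_t)=E(\Sigma_0)$, so every $\Sigma_t$ is itself a warped area-minimizing hypersurface in $[\Sigma]$, and in particular it is stable. The derivative of $E(\Sigma_t)$ then vanishes, and since $\int_{\Sigma_t}u^\gamma\phi_t>0$ we get $\tilde H(t)\equiv0$.

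\textbf{Transfer of the rigidity.} The analysis carried out for $\Sigma_0$ in the proof of Theorem \ref{Thm-scalar} now applies to each $\Sigma_t$. This needs $\Sigma_t$ to admit no metric of positive scalar curvature, which holds because $\Sigma_t$ is diffeomorphic to $\Sigma$. The conclusions on each leaf are $\nabla_{\Sigma_t}w=0$, that $\Sigma_t$ is umbilic, $\operatorname{Sc}_{\Sigma_t}=0$, and equality in \eqref{ineqn-spectral-scalar}.

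\textbf{Extension to $M$.} An open–closed argument extends this. Openness comes from rerunning Lemma \ref{Lem-foliation-construct-scalar} at any leaf. Closedness follows because limits of minimizers in $[\Sigma]$ are again minimizers, by compactness for $3\le n-1\le6$ in the regularity range.

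\textbf{Main obstacle.} The delicate step is the sign conclusion drawn from \eqref{eq:H tilde inequality-scalar}. The function $\Psi$ there comes from a test function $\varphi_t$ chosen leaf by leaf, so continuity of $\Psi$ in $t$, which the ODE comparison needs, is not automatic. I would handle this by choosing $\varphi_t$ as the first eigenfunction of the positive operator built from $-\frac{2(n-2)}{n-3}\Delta_{\Sigma_t}+\frac12\operatorname{Sc}_{\Sigma_t}$ and its weighted analogue. Such an eigenfunction depends smoothly on $t$ because the first eigenvalue is simple. If only measurability were available, I would instead argue by contradiction at the first time $t_0>0$ where $\tilde H>0$, using the pointwise inequality $\tilde H'\le -\Psi\tilde H$ near $t_0$, where $\Psi$ is bounded since $w_\nu$, $\phi_t$ and $\varphi_t$ are.
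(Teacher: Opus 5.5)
Your proof is correct and follows the paper's. $\tilde H(0)=0$ and \eqref{eq:H tilde inequality-scalar} fix the sign of $\tilde H$, so $E(\Sigma_t)\le E(\Sigma_0)$. Minimality of $\Sigma$ in $[\Sigma]$ then forces equality, so every leaf is minimizing, hence stable, with $\tilde H\equiv 0$; the paper uses that minimality silently, and you make it explicit through the graph/homology remark.

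Your fallback for the sign step is a genuine improvement over the paper, which asserts but never justifies that $\Psi$ is continuous. $\Psi(t)$ is a $\varphi_t$-weighted average of $w_{\nu_t}\phi_t$, so it is bounded by $\max_{\Sigma_t}|w_{\nu_t}\phi_t|$ however $\varphi_t$ is chosen, and your first-crossing Gr\"onwall argument needs nothing more.

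The \emph{transfer of rigidity} paragraph is not needed for this statement. Its leafwise conclusions $\nabla_{\Sigma_t}w=0$ and $\operatorname{Sc}_{\Sigma_t}=0$ are inherited from the proof of Theorem \ref{Thm-scalar}. There, the stability inequality together with the no-PSC hypothesis only shows that the null direction of $-\frac{2(n-2)}{n-3}\Delta_\Sigma+\frac12\operatorname{Sc}_\Sigma$ is $u^{(\gamma-2)/2}$, i.e.\ that $u^{2}g_\Sigma$ is scalar-flat. For example, take flat $\mathbb{S}^1\times\mathbb{T}^{n-1}$ rescaled by $u^{-2}$ with $u$ nonconstant on every slice: this satisfies \eqref{ineqn-spectral-scalar} with equality, yet $\nabla_\Sigma w\neq 0$ on every warped area-minimizing slice.
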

\begin{proof}
  Let \( \partial_t \) be the variation vector field of the foliation \( \{\Sigma_t \}_{t\in (-\varepsilon,\varepsilon)} \), and \( \phi_t = \langle \partial_t , \nu_t \rangle \).
    Recall that the first variation $E $, and
\[ \tfrac{\mathrm{d}}{\mathrm{d} t} E (\Sigma_t) = \int_{\Sigma_t} \tilde{H}
   (t) u^{\gamma} \phi_{t}. \]
 It follows from $\tilde{H} (0) = 0$ and
\eqref{eq:H tilde inequality-scalar} that $\tilde{H} (t) \leq 0$ for $t
\geq 0$ and $\tilde{H} (t) \geq 0$ for $t \leq 0$. So $E
(\Sigma_t) \leq E (\Sigma_0)$ for all $t \in (- \varepsilon,
\varepsilon)$ and hence
\[ E (\Sigma_t) = E (\Sigma_0)  \]
for all \( t \in (-\varepsilon, \varepsilon) \).
Hence, all the foliation analysis on $\Sigma_0$ can be applied to $\Sigma_t$.
Since $M$ is connected, we can conclude that $M$ is foliated by the stable warped area-minimizing hypersurface $\Sigma_t$.
\end{proof}
In the following, we deduce that the metric $g$ can be locally written as a warped product metric. From lemma \ref{Lem-foliation-construct-scalar} and lemma \ref{lem-minimizer-scalar}, we can obtain that, locally, $M^{n}$ can be written as $(-\epsilon,\epsilon)\times \Sigma$ such that each $\Sigma_{t}$ is a warped area-minimizing hypersurface and $\Sigma_{t}$ is umbilical. Then the metric $g$ can be written as $g=dt^2+f^2(t)g_{\Sigma}$ for some positive function $f$ on $(-\epsilon,\epsilon)$. Since $H_{t}=-(n-1) u^{-1}u'(t)=(n-1)f^{-1}f'(t)$, we obtain $f(t)=u^{-1}(t)$. By an open and closed argument, by passing to a suitable cover $\tilde{M}$, $(\tilde{M}^{n},g)$ is isometric to $(\mathbb{S}^1\times \Sigma^{n-1},ds^2+f^{2}(t)g|_{\Sigma})$. Because $(\Sigma,g_{\Sigma})$ is scalar flat and $\Sigma$ admits no positive scalar curvature, we can deduce that $(\Sigma,g_{\Sigma})$ is Ricci flat.

\bibliographystyle{alpha}
\bibliography{reference}
\end{document}